\numberwithin{equation}{section}
\newtheorem{theorem}{Theorem}[section]
\newtheorem{proposition}[theorem]{Proposition}
\newtheorem{lemma}[theorem]{Lemma}
\theoremstyle{definition}
\newcommand{\maj}{{\mathrm {maj}}}
\newcommand{\inv}{{\mathrm {inv}}}
\newcommand{\minimaj}{{\mathrm {minimaj}}}
\newcommand{\dinv}{{\mathrm {dinv}}}
\newcommand{\Des}{{\mathrm {Des}}}
\newcommand{\Val}{{\mathrm {Val}}}
\newcommand{\Rise}{{\mathrm {Rise}}}
\newcommand{\grFrob}{{\mathrm {grFrob}}}
\newcommand{\rev}{{\mathrm {rev}}}
\newcommand{\Hom}{{\mathrm {Hom}}}
\newcommand{\Frob}{{\mathrm {Frob}}}
\newcommand{\rword}{{\mathrm {rword}}}
\newcommand{\iDes}{{\mathrm {iDes}}}
\newcommand{\symm}{{\mathfrak{S}}}
\newcommand{\PP}{{\mathbb {P}}}
\newcommand{\CC}{{\mathbb {C}}}
\newcommand{\QQ}{{\mathbb {Q}}}
\newcommand{\ZZ}{{\mathbb {Z}}}
\newcommand{\NN}{{\mathbb{N}}}
\newcommand{\RR}{{\mathbb{R}}}
\newcommand{\OP}{{\mathcal{OP}}}
\newcommand{\xx}{{\mathbf {x}}}
\newcommand{\yy}{{\mathbf {y}}}
\begin{document}

\title[Hall-Littlewood expansions of Schur delta operators at $t = 0$]
{Hall-Littlewood expansions of Schur delta operators at $t = 0$}

\author{James Haglund}
\address
{Department of Mathematics \newline \indent
University of Pennsylvania \newline \indent
Philadelphia, PA, 19104-6395, USA}
\email{jhaglund@math.upenn.edu}

\author{Brendon Rhoades}
\address
{Department of Mathematics \newline \indent
University of California, San Diego \newline \indent
La Jolla, CA, 92093-0112, USA}
\email{bprhoades@math.ucsd.edu}

\author{Mark Shimozono}
\address
{Department of Mathematics \newline \indent
460 McBryde Hall, Virginia Tech \newline \indent
255 Stanger St. \newline \indent
Blacksburg, VA, 24601, USA}
\email{mshimo@math.vt.edu}

\begin{abstract}
For any Schur function $s_{\nu}$, the associated {\em delta operator}
$\Delta'_{s_{\nu}}$ is a linear operator on the ring of symmetric functions
which has the modified Macdonald polynomials as an eigenbasis.
When $\nu = (1^{n-1})$ is a column of length $n-1$, the symmetric function
$\Delta'_{e_{n-1}} e_n$ appears in the Shuffle Theorem of Carlsson-Mellit.  
More generally,  when 
$\nu = (1^{k-1})$ is any column the polynomial $\Delta'_{e_{k-1}} e_n$ 
is the symmetric function side of the Delta Conjecture of
Haglund-Remmel-Wilson.
We give an expansion of $\omega \Delta'_{s_{\nu}} e_n$ at $t = 0$ 
in the dual Hall-Littlewood basis for any partition $\nu$.
The Delta Conjecture at $t = 0$ was recently proven by
Garsia-Haglund-Remmel-Yoo; our methods give a new proof of this result.
We give an algebraic interpretation of 
$\omega \Delta'_{s_{\nu}} e_n$ at $t = 0$ in terms of a $\mathrm{Hom}$-space.
\end{abstract}

\keywords{Hall-Littlewood function, Macdonald polynomial, delta operator}
\maketitle

\section{Introduction and Main Results}
\label{Introduction}

Let $\Lambda = \bigoplus_{n \geq 0} \Lambda_n$ be the ring of symmetric functions
over the ground field $\QQ(q,t)$ in an infinite variable set $\xx = (x_1, x_2, \dots )$.
Given a partition $\mu$, let $\widetilde{H}_{\mu} = \widetilde{H}_{\mu}(\xx;q,t)$
be the associated {\em modified Macdonald symmetric function}.
The collection $\{ \widetilde{H}_{\mu} \,:\, \text{$\mu$ a partition} \}$ forms a basis
for the ring $\Lambda$.

If $f \in \Lambda$ is any symmetric function, the {\em (unprimed) delta operator}
$\Delta_f: \Lambda \rightarrow \Lambda$ is the Macdonald eigenoperator given by
\begin{equation}
\Delta_f: \widetilde{H}_{\mu} \mapsto f( \dots, q^{i-1} t^{j-1}, \dots) \cdot \widetilde{H}_{\mu},
\end{equation}
where $(i,j)$ ranges over all coordinates in the (English) Ferrers diagram
of the partition $\mu$ (and all remaining variables in $f$ are set to zero).  
As an example, if $\mu = (3,2)$, we fill the Ferrers diagram of $\mu$ with monomials as 
\begin{center}
\begin{Young}
$1$ & $q$ & $q^2$ \cr
$t$ & $qt$
\end{Young}
\end{center}
so that $\Delta_f: \widetilde{H}_{(3,2)} \mapsto f(1, q, q^2, t, qt) \cdot \widetilde{H}_{(3,2)}$.

In this paper, we will focus on a primed version $\Delta'_f: \Lambda \rightarrow \Lambda$
 of the delta operator defined by 
\begin{equation}
\Delta'_f: \widetilde{H}_{\mu} \mapsto f( \dots, q^{i-1} t^{j-1}, \dots) \cdot \widetilde{H}_{\mu},
\end{equation}
where $(i,j)$ range over all coordinates $\neq (0,0)$ in the Ferrers diagram of $\mu$.
If $\mu = (3,2)$ as above, we fill the Ferrers diagram of $\mu$ with monomials as 
\begin{center}
\begin{Young}
$\cdot$ & $q$ & $q^2$ \cr
$t$ & $qt$
\end{Young}
\end{center}
so that $\Delta'_f: \widetilde{H}_{(3,2)} \mapsto f(q, q^2, t, qt) \cdot \widetilde{H}_{(3,2)}$.

Let $k \leq n$ be positive integers.
The {\em Delta Conjecture} of Haglund, Remmel, and Wilson \cite{HRW}
predicts the monomial
expansion of $\Delta'_{e_{k-1}} e_n$ in terms of lattice paths.
It reads 
\begin{equation}
\Delta'_{e_{k-1}} e_n = \Rise_{n,k-1}(\xx;q,t) = \Val_{n,k-1}(\xx;q,t),
\end{equation}
where $\Rise_{n,k-1}(\xx;q,t)$ and $\Val_{n,k-1}(\xx;q,t)$ are certain combinatorially 
defined quasisymmetric functions; see \cite{HRW} for their definitions.

Various special cases of the Delta Conjecture have been proven already.  When 
$k = n$, the Delta Conjecture reduces to the Shuffle Theorem of Carlsson and Mellit \cite{CM}.
In the specialization $q = 1$, Romero \cite{Romero} has proven
\begin{equation}
\Delta'_{e_{k-1}} e_n|_{q = 1} =
\Delta'_{e_{k-1}} e_n|_{t = 1, q = t} =
\Rise_{n,k}(\xx;1,t).
\end{equation}
Zabrocki \cite{Zabrocki} has given evidence for the Delta Conjecture at $t = 1/q$ by showing
that both sides coincide upon pairing with $e_n$ under the Hall inner product.
At $q = 0$, the following theorem summarizes work of Wilson and Rhoades.

\begin{theorem}
\label{delta-zero-start}  (Wilson \cite{WMultiset}, R. \cite{Rhoades})
Let $k \leq n$ be positive integers.  We have
\begin{equation}
\Rise_{n,k-1}(\xx;q,0) = \Rise_{n,k-1}(\xx;0,q) = \Val_{n,k-1}(\xx;q,0) = \Val_{n,k-1}(\xx;0,q).
\end{equation}
\end{theorem}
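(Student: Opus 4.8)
\emph{Proof idea.} The plan is to show that all four quantities equal a single, explicitly described generating function over ordered set partitions of $[n]$ into $k$ blocks, and therefore equal one another. Recall from \cite{HRW} that $\Rise_{n,k-1}(\xx;q,t)$ is the sum of $q^{\dinv(\pi,\sigma)}\, t^{\area(\pi)}\, F_{\iDes(\sigma)}(\xx)$ over labeled Dyck paths $(\pi,\sigma)$ of size $n$ carrying $n-k$ decorated rises, that $\Val_{n,k-1}(\xx;q,t)$ is the analogous sum over labeled Dyck paths carrying $n-k$ decorated contractible valleys, and that $F$ denotes Gessel's fundamental quasisymmetric function. Since $q^{\dinv}$ and $t^{\area}$ are monomials, each specialization in the theorem is a subsum: $\Rise_{n,k-1}(\xx;q,0)$ retains only the area-zero terms, $\Rise_{n,k-1}(\xx;0,q)$ only the dinv-zero terms, and similarly for $\Val$.

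First I would give a transparent model for each of the four surviving families. An area-zero labeled decorated path has its area vector supported on the decorated rises, which break into increasing runs, so it is determined by the set of decorated positions together with a labeling word subject to a short list of inequalities; along the way $\dinv$ becomes an explicit statistic of that word. Dually, a dinv-zero labeled decorated path is pinned down by the positions of its rises (resp. contractible valleys) and a labeling, with $\area$ the surviving statistic. The rise and valley versions are translated into one another by the usual rise/valley dictionary for labeled Dyck paths.

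The heart of the argument is to produce, for each of the four families, a bijection onto the set $\OP_{n,k}$ of ordered set partitions of $[n]$ into $k$ blocks such that: (i) the surviving $q$-statistic ($\dinv$ on the two $t=0$ families, $\area$ on the two $q=0$ families) is carried, after one global normalization of the exponent, to a single fixed Mahonian-type statistic on $\OP_{n,k}$; and (ii) the index $\iDes(\sigma)$ is carried to the descent composition $\alpha(\mathcal{O})$ of the ordered set partition $\mathcal{O}$. Granting (i) and (ii), each of the four specializations equals $\sum_{\mathcal{O}\in\OP_{n,k}} q^{\mathrm{stat}(\mathcal{O})}\, F_{\alpha(\mathcal{O})}(\xx)$, hence they are equal. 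An alternative is to bypass $\OP_{n,k}$ and instead check that all four families obey a common recursion in $(n,k)$ — peeling off the largest letter, or the last block — with matching initial data; the bookkeeping changes but the essential difficulty does not.

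The main obstacle is (i) across the $t=0$ and $q=0$ sides at once: on one side $q$ records $\dinv$ while on the other it records $\area$, two statistics of genuinely different combinatorial character, so matching them amounts to a constrained instance of the $\dinv\leftrightarrow\area$ sweep (the $\zeta$-map phenomenon for Dyck paths), now entangled with the $n-k$ decorations and subject to the extra requirement that the inverse-descent data $\iDes(\sigma)$ survive the bijection. Making this interaction work — equivalently, matching $\dinv$ at $t=0$ with $\area$ at $q=0$ while tracking $\iDes$ throughout — is the technical core of the theorem, and is exactly what the arguments of Wilson \cite{WMultiset} and Rhoades \cite{Rhoades} supply. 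By comparison, the rise $\leftrightarrow$ valley identification at a fixed specialization is a more routine labeling-exchange bijection, though it too must be verified to preserve $\iDes$.
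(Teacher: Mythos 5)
The paper does not prove Theorem~\ref{delta-zero-start} at all; it cites it to Wilson and Rhoades and summarizes the argument in one sentence: ``interpret the four formal power series in terms of four statistics (called $\inv, \maj, \dinv,$ and $\minimaj$) on ordered multiset partitions, and then prove the relevant equidistribution results.'' Your sketch is broadly in the same spirit --- reduce each specialization to a combinatorial model, carry the surviving $q$-statistic and the descent data through bijections, and conclude equality --- and you correctly identify that the technical content is supplied by the cited works. So as a description of the structure of the argument, it is consistent with what the paper and the references actually do.

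Where your framing drifts from the actual mechanism is in two places. First, once one sets $t=0$ or $q=0$, the decorated labeled Dyck paths collapse to ordered multiset partitions, and the surviving statistic is not literally $\dinv$ or $\area$ on a path any more but one of four \emph{distinct} Mahonian statistics ($\inv$, $\maj$, $\dinv$, $\minimaj$) on ordered multiset partitions --- one for each of the four sides of the chain of equalities. The equidistribution that Wilson and Rhoades establish is among these four, via insertion and recursion arguments on multiset partitions, not via a $\zeta$-map-type sweep on Dyck paths; your intuition that the ``$\dinv\leftrightarrow\area$'' matching is the crux is correct in spirit, but the $\zeta$-map framing does not reflect the actual proofs. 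Second, your claim that the rise~$\leftrightarrow$~valley identification at a fixed specialization is a ``routine labeling-exchange bijection'' on paths is too optimistic: the area-zero decorated-rise paths and the area-zero decorated-valley paths are genuinely different families, and the equality of $\Rise_{n,k-1}(\xx;q,0)$ and $\Val_{n,k-1}(\xx;q,0)$ is itself one of the nontrivial equidistribution statements (here $\inv$ vs.\ $\dinv$ on ordered multiset partitions), not an instance of a pre-existing rise/valley dictionary. None of this is a gap in the sense of a step that would fail, since you explicitly defer to the references for the hard part --- exactly as the paper does --- but the route you would take through labeled-Dyck-path combinatorics is not the route that actually closes the argument.
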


Theorem~\ref{delta-zero-start} is proven by interpreting the four formal power series
therein in terms of four statistics 
(called $\inv, \maj, \dinv,$ and $\minimaj$) on ordered multiset partitions, and then
proving the relevant equidistribution results.  
Let $C_{n,k} = C_{n,k}(\xx;q)$ be the common symmetric function of 
Theorem~\ref{delta-zero-start}:
\begin{equation}
C_{n,k}  := 
\Rise_{n,k-1}(\xx;q,0) = \Rise_{n,k-1}(\xx;0,q) = 
\Val_{n,k-1}(\xx;q,0) = \Val_{n,k-1}(\xx;0,q).
\end{equation}

The authors of this paper showed \cite{HRS} that the image
$\omega C_{n,k}$ of $C_{n,k}$ under the $\omega$ involution 
has the following expansion
in the dual Hall-Littlewood basis:
\begin{equation}
\label{c-hl-expansion}
\omega C_{n,k} = \sum_{\substack{\mu \vdash n \\ \ell(\mu) = k}}
q^{\overline{b}(\mu)}{k \brack m(\mu)}_q \cdot Q'_{\mu}.
\end{equation}
Here ${k \brack m(\mu)}_q$ is the $q$-multinomial coefficient corresponding
to the part multiplicities of $\mu$, the numbers $b(\mu)$ and $\overline{b}(\mu)$ are
given by
\begin{equation}
\begin{cases}
b(\mu) = \sum_i \mu_i(i-1) \\
\overline{b}(\mu) = \sum_i (\mu_i - 1)(i-1),
\end{cases}
\end{equation}
and 
$Q'_{\mu} = Q'_{\mu}(\xx;q)$ is the dual Hall-Littlewood symmetric function related 
to the Schur basis by
\begin{equation}
\label{Q-to-s}
Q'_{\mu} = \sum_{\lambda} K_{\lambda,\mu}(q) s_{\lambda},
\end{equation}
where $K_{\lambda,\mu}(q) \in \ZZ_{\geq 0}[q]$ is the Kostka-Foulkes polynomial.

Garsia, Haglund, Remmel, and Yoo \cite{GHRY}
recently proved the Delta Conjecture at $t = 0$
by using plethystic methods and Equation~\eqref{c-hl-expansion} to show
\begin{equation}
\label{delta-zero-equation}
\Delta'_{e_{k-1}} e_n |_{t = 0} = \Delta'_{e_{k-1}} e_n|_{q = 0, t = q} = C_{n,k}.
\end{equation} 
We give a new proof of 
Equation~\eqref{delta-zero-equation} using skewing operators $e_j^{\perp}$ 
on the ring $\Lambda$ of symmetric functions together with 
${}_3 \phi_2$-hypergeometric transformations.

Finding positive $Q'$-basis expansions of symmetric functions is interesting 
for several reasons.  Equation~\eqref{Q-to-s} shows that any symmetric 
function with a positive $Q'$ expansion is automatically Schur positive, and thus
is the Frobenius image some module over the symmetric group $\symm_n$.
Even better, the function $Q'_{\mu}$ is itself (up to a twist) the Frobenius image
of the action of $\symm_n$ on the cohomology of the {\em Springer fiber}
$\mathcal{B}_{\mu}$ or on the quotient of the polynomial ring
$\QQ[x_1, \dots, x_n]$ by the {\em Tanisaki ideal} $I_{\mu}$.
We generalize 
Equation~\eqref{delta-zero-equation}
to find the $Q'$-basis expansion of
$\omega \Delta'_{s_{\nu}} e_n |_{t = 0}$ for any 
partition $\nu$.

\begin{theorem}
\label{delta-hl-expansion-theorem}
Let $\nu$ be a partition and let $n \geq 0$.  We have
\begin{equation}
\omega \Delta'_{s_{\nu}} e_n |_{t = 0} =  \sum_{k = \ell(\nu) + 1}^{|\nu| + 1}
P_{\nu, k - 1}(q)
\sum_{\substack{\mu \vdash n \\ \ell(\mu) = k}}
q^{\overline{b}(\mu)} 
\cdot {k \brack m(\mu)}_q 
\cdot Q'_{\mu},
\end{equation}
where
\begin{equation}
P_{\nu,k-1}(q) = q^{|\nu| - {k  \choose 2}} 
 \sum_{\substack{|\rho| = |\nu| \\ \ell(\rho) = k-1}} q^{b(\rho)}  {k-1 \brack m(\rho)}_q K_{\nu,\rho}(q)
\end{equation}
and $K_{\nu,\rho}(q)$ is the Kostka-Foulkes polynomial.
\end{theorem}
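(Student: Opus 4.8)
The plan is to deduce the theorem from the two known facts \eqref{c-hl-expansion} and \eqref{delta-zero-equation}, together with a principal specialization identity. The crucial observation is that the substitution $t=0$ makes $\Delta'_{s_\nu}$ act in a very simple way on the eigenbasis: after setting $t=0$, the only cells of the Ferrers diagram of $\mu$ carrying a nonzero monomial $q^{i-1}t^{j-1}$ are those of the first row, which carry $1,q,q^2,\dots,q^{\mu_1-1}$. Deleting the corner, we see that the eigenvalue of $\Delta'_{s_\nu}$ on $\widetilde{H}_\mu$ at $t=0$ is the principal specialization $s_\nu(q,q^2,\dots,q^{\mu_1-1})$, which depends on $\mu$ only through $\mu_1$; the same holds with $s_\nu$ replaced by an arbitrary symmetric function $f$.

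The first step is to establish the identity
\begin{equation}
\label{plan-reduction}
\Delta'_{s_\nu}e_n\big|_{t=0}\;=\;\sum_{k=\ell(\nu)+1}^{|\nu|+1}P_{\nu,k-1}(q)\cdot\Delta'_{e_{k-1}}e_n\big|_{t=0}.
\end{equation}
Expand $e_n=\sum_{\mu\vdash n}c_\mu(q,t)\,\widetilde{H}_\mu$ in the modified Macdonald basis. Because $\{\widetilde{H}_\mu:\mu\vdash n\}$ remains a basis of $\Lambda_n$ after setting $t=0$ (a standard fact: $\widetilde{H}_\mu(\xx;q,0)=\widetilde{H}_{\mu'}(\xx;0,q)$ is, up to relabeling, a modified Hall--Littlewood polynomial), the transition matrix to the Schur basis is nonsingular at $t=0$, so each $c_\mu(q,t)$ is regular at $t=0$. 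Hence for any symmetric function $f$ the expansion $\Delta'_f e_n=\sum_\mu c_\mu(q,t)\,\varepsilon^f_\mu(q,t)\,\widetilde{H}_\mu$ --- where $\varepsilon^f_\mu$ denotes the $\Delta'_f$-eigenvalue on $\widetilde{H}_\mu$, a polynomial in $q$ and $t$ --- may be specialized at $t=0$ term by term, yielding
\begin{equation}
\label{plan-specialization}
\Delta'_f e_n\big|_{t=0}\;=\;\sum_{\mu\vdash n}c_\mu(q,0)\,f(q,q^2,\dots,q^{\mu_1-1})\,\widetilde{H}_\mu\big|_{t=0}.
\end{equation}
In particular $\Delta'_f e_n|_{t=0}$ depends on $f$ only through its principal specializations $f(q,q^2,\dots,q^m)$ with $0\le m\le n-1$. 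Since $\{\widetilde{H}_\mu|_{t=0}\}_{\mu\vdash n}$ is a basis, \eqref{plan-reduction} therefore follows from the numerical identity
\begin{equation}
\label{plan-ps}
s_\nu(q,q^2,\dots,q^m)\;=\;\sum_{k=\ell(\nu)+1}^{|\nu|+1}P_{\nu,k-1}(q)\cdot e_{k-1}(q,q^2,\dots,q^m)\qquad(m\ge0),
\end{equation}
applied with $m=\mu_1-1$ for each $\mu\vdash n$.

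It remains to prove \eqref{plan-ps}, and here I would invoke the expansion of a Schur function in the Hall--Littlewood $P$-basis, $s_\nu=\sum_{\rho\vdash|\nu|}K_{\nu,\rho}(q)\,P_\rho$, the classical principal specialization of Hall--Littlewood polynomials (Macdonald, Ch.~III),
\begin{equation}
\label{plan-HLspec}
P_\rho(1,q,q^2,\dots,q^{m-1};q)\;=\;q^{b(\rho)}{m\brack\ell(\rho)}_q{\ell(\rho)\brack m(\rho)}_q,
\end{equation}
the elementary evaluation $e_{k-1}(q,q^2,\dots,q^m)=q^{\binom{k}{2}}{m\brack k-1}_q$, and homogeneity $s_\nu(q,q^2,\dots,q^m)=q^{|\nu|}\,s_\nu(1,q,\dots,q^{m-1})$. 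Inserting the definition of $P_{\nu,k-1}(q)$ into the right-hand side of \eqref{plan-ps}, the powers $q^{\pm\binom{k}{2}}$ cancel; for $\rho$ with $\ell(\rho)=k-1$ the surviving factors $q^{b(\rho)}{k-1\brack m(\rho)}_q{m\brack k-1}_q$ are exactly $P_\rho(1,q,\dots,q^{m-1};q)$ by \eqref{plan-HLspec}; and summing first over such $\rho$ and then over $k$ converts the right-hand side into $q^{|\nu|}\sum_{\rho\vdash|\nu|}K_{\nu,\rho}(q)\,P_\rho(1,q,\dots,q^{m-1};q)=q^{|\nu|}\,s_\nu(1,q,\dots,q^{m-1})$, which is the left-hand side. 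The summation range $\ell(\nu)+1\le k\le|\nu|+1$ in \eqref{plan-ps} (and in the theorem) is forced by the vanishing of $K_{\nu,\rho}(q)$ unless $\ell(\nu)\le\ell(\rho)\le|\nu|$.

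Finally, applying $\omega$ to \eqref{plan-reduction} and using $\omega\,\Delta'_{e_{k-1}}e_n|_{t=0}=\omega C_{n,k}=\sum_{\mu\vdash n,\,\ell(\mu)=k}q^{\overline{b}(\mu)}{k\brack m(\mu)}_q Q'_\mu$ --- which is \eqref{delta-zero-equation} followed by \eqref{c-hl-expansion} --- gives precisely the asserted formula. I expect the one point requiring real care to be the first step: the term-by-term passage to $t=0$, i.e.\ the regularity at $t=0$ of the coefficients $c_\mu(q,t)$ in the Macdonald expansion of $e_n$ (equivalently, that $\Delta'_{s_\nu}e_n$ admits a well-defined, $t=0$-regular Schur expansion at all). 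This is the ``plethystic'' subtlety confronted by Garsia--Haglund--Remmel--Yoo already in the case $\nu=(1^{k-1})$; once it is in hand, what remains is routine bookkeeping with Gaussian binomials and Kostka--Foulkes polynomials.
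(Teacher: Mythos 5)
Your proof is correct, and it takes a genuinely different route from the paper's, although both ultimately rest on the same underlying principal-specialization identity.

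The paper works entirely at the level of Schur coefficients: starting from the explicit $\rev_q Q'$-expansion \eqref{delta-expansion-zero} and its $s_\nu$-analogue \eqref{schur-delta-expansion-first}, it massages \eqref{DC0} into the form \eqref{DC2}, matches coefficients of $s_{\lambda} K_{\lambda',\mu}(1/q)$ to reduce to \eqref{simple2}, and then proves \eqref{simple2} by Cauchy's identity together with the $Q'$-specialization formulas of \cite{GHRY}. Your approach short-circuits this by first establishing the operator-level reduction \eqref{plan-reduction}: since each coefficient $c_\mu(q,t)$ in the Macdonald expansion of $e_n$ is regular at $t=0$ (which is also visible directly from the explicit formula $c_\mu = MB_\mu\Pi_\mu/w_\mu$, since $w_\mu|_{t=0}\neq 0$), and the $\Delta'_f$-eigenvalue at $t=0$ depends only on a principal specialization of $f$, the identity \eqref{plan-ps} in one variable $m$ lifts immediately to the operator identity. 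You then prove \eqref{plan-ps} directly from the Hall--Littlewood expansion $s_\nu = \sum_\rho K_{\nu\rho}(q)P_\rho$ and Macdonald's classical formula for $P_\rho(1,q,\dots,q^{m-1};q)$. This is essentially the same numerical fact the paper proves as \eqref{simple2}, but the paper reaches it via a $Q'$-side Cauchy manipulation whereas you reach it via the dual $P$-side expansion; they are Cauchy-dual statements. The conceptual advantage of your version is that \eqref{plan-reduction} explains \emph{why} the answer takes the form it does: at $t=0$, the operator $\Delta'_{s_\nu}$ applied to $e_n$ is literally a $\QQ(q)$-linear combination of the operators $\Delta'_{e_{k-1}}$ applied to $e_n$, with coefficients $P_{\nu,k-1}(q)$ reading off the expansion of $s_\nu$ in the $e_{k-1}$'s under principal specialization, and then the theorem is an immediate corollary of the already-known column case. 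One small caveat: in the paper's conventions the relevant specialization parameter is $\ell(\mu)$ rather than $\mu_1$; this does not affect your argument, since either way the specialization parameter ranges over $\{0,1,\dots,n-1\}$ as $\mu$ ranges over partitions of $n$, which is all that is needed.
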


As operators on $\Lambda$ we have the identity
\begin{equation}
\Delta_{s_{\nu}} = \sum_{\rho \subseteq \nu} \Delta'_{s_{\rho}},
\end{equation}
where $\rho$ ranges over all partitions obtainable from $\nu$ by removing a horizontal
strip.  Theorem~\ref{delta-hl-expansion-theorem} therefore also gives a positive expansion 
for  $\omega \Delta_{s_{\nu}} e_n |_{t = 0}$ in the $Q'$-basis, where we are using an unprimed
delta operator.

Haiman proved that the symmetric function $\Delta'_{e_{n-1}} e_n = \Delta_{e_n} e_n$ 
(otherwise known as $\nabla e_n$) is the bigraded Frobenius image of the diagonal
coinvariant ring \cite{Haiman}.  It is an open problem to give (even conjecturally)
a corresponding algebraic interpretation of the symmetric function
$\Delta'_{e_{k-1}} e_n$ appearing in the Delta Conjecture.

If $\nu$ with $\ell(\nu) = n$, Haiman \cite{Haiman} gave an algebraic interpretation of 
$\Delta_{s_{\nu}} e_n$ as a Schur functor applied to a vector bundle 
over the Hilbert scheme of $n$ points in the plane $\CC^2$.  In particular,
Haiman's result implies that $\Delta_{s_{\nu}} e_n$ is Schur positive when
$\ell(\nu) = n$.  Haiman conjectured that $\Delta_{s_{\nu}} e_n$ is Schur positive
for any partition $\nu$.  Haglund and Wilson have computational evidence
that $\Delta'_{s_{\nu}} e_n$ is also Schur positive for any partition $\nu$.
Theorem~\ref{delta-hl-expansion-theorem} gives evidence for the Schur positivity
of $\Delta'_{s_{\nu}} e_n$ (and thus also $\Delta_{s_{\nu}} e_n$) for 
arbitrary partitions $\nu$.

In \cite{HRS} the authors found an algebraic interpretation of the Delta Conjecture
at $t = 0$.  Let the symmetric group $\symm_n$ act on the polynomial ring
$\QQ[x_1, \dots, x_n]$ in $n$ variables.  Following \cite[Defn. 1.1]{HRS}, given
positive integers $k \leq n$ we define the ideal $I_{n,k} \subseteq \QQ[x_1, \dots, x_n]$ by
\begin{equation}
I_{n,k} := \langle e_n, e_{n-1}, \dots, e_{n-k+1}, x_1^k, x_2^k, \dots, x_n^k \rangle
\end{equation}
and let 
\begin{equation}
R_{n,k} := \QQ[x_1, \dots, x_n]/I_{n,k}
\end{equation}
be the corresponding quotient.
When $k = n$ the ring $R_{n,k}$ reduces to the classical {\em coinvariant algebra}
$R_n = \QQ[x_1, \dots, x_n] / \langle e_1, \dots, e_n \rangle$ 
obtained by modding out by symmetric polynomials in $\QQ[x_1, \dots, x_n]$ with vanishing
constant term.
Just as algebraic properties of $R_n$ are governed by combinatorial properties of 
permutations in $\symm_n$, it is shown in \cite{HRS} that algebraic properties of 
$R_{n,k}$ are governed by {\em ordered set partitions} of
$[n] := \{1, 2, \dots, n\}$ with $k$ blocks.

The ring $R_{n,k}$ has the structure of a graded $\symm_n$-module; in 
\cite{HRS} it is proven that its graded Frobenius image is
\begin{equation}
\label{r-frobenius}
\grFrob(R_{n,k};q) = (\rev_q \circ \omega) C_{n,k},
\end{equation}
where $\rev_q$ is the operator which reverses the coefficient sequences of polynomials
in $q$, e.g. 
\begin{equation*}
\rev_q(3 s_{(2,1)} q^2 + 2 s_{(1,1,1)} q + s_{(3)}) =  s_{(3)} q^2 + 2 s_{(1,1,1)} q + 3 s_{(2,1)}.
\end{equation*}
Thanks to the Garsia-Haglund-Remmel-Yoo
Equation~\eqref{delta-zero-equation} we can also express 
Equation~\eqref{r-frobenius} as
\begin{equation}
\label{r-frobenius-delta}
\grFrob(R_{n,k};q) = (\rev_q \circ \omega) \Delta'_{e_{k-1}} e_n |_{t = 0}.
\end{equation}
Informally, we think of $R_{n,k}$ as the `coinvariant algebra' attached to the operator
$\Delta'_{e_{k-1}} e_n$.

Given Equation~\eqref{r-frobenius-delta}, one could ask for a graded $\symm_n$-module
$R_{\nu,n}$ which satisfies
\begin{equation}
\label{goal-frobenius-r-nu}
\grFrob(R_{\nu,n};q) = 
(\rev_q \circ \omega) \Delta'_{s_{\nu}} e_n |_{t = 0}
\end{equation}
for any partition $\nu$.
This would give a coinvariant algebra attached to the operator $\Delta'_{s_{\nu}}$.
In \cite{RW} Rhoades and Wilson exhibited a quotient of $\QQ[x_1, \dots, x_n]$
with graded Frobenius image 
$(\rev_q \circ \omega) \Delta_{s_{\nu}} e_n |_{t = 0}$ when $\nu$ is a hook of the form
$(r,1^{n-1})$.

For general partitions $\nu \vdash m$, it is impossible to exhibit a module $R_{n,\nu}$ 
satisfying Equation~\eqref{goal-frobenius-r-nu} as a submodule of 
$\QQ[x_1, \dots, x_n]$; the graded components of the polynomial ring
are not large enough for this purpose.
Given Theorem~\ref{delta-hl-expansion-theorem}, two artificial solutions to this problem
are as follows.
\begin{itemize}
\item  Very artificially, we could use the positive expansion of $Q'_{\mu}$ in the Schur basis
$\{ s_{\lambda} \}$ and define $R_{n,\nu}$ as a direct sum of $\symm_n$-irreducibles
$S^{\lambda}$ with appropriate grading shifts.
\item  Less artificially, we could use the fact that $\rev_q(Q'_{\mu})$ is the 
graded Frobenius image of the {\em Tanisaki quotient} 
$R_{\mu} = \QQ[x_1, \dots, x_n]/I_{\mu}$, where $I_{\mu}$ is the 
Tanisaki ideal.  
Theorem~\ref{delta-hl-expansion-theorem} then leads to a definition of 
$R_{n,\nu}$ as a direct sum of $R_{\mu}$'s with appropriate grading shifts.
\end{itemize}
The second bullet point is less artificial because the pieces $R_{\mu}$ which constitute the 
module $R_{n,\nu}$ are larger than the pieces $S^{\lambda}$ appearing in the first 
bullet point.

In this paper we give a still less artificial construction for $R_{n,\nu}$ as a Hom-space.
For any $n, m \geq 0$ we
define a graded $\symm_m \times \symm_n$-module
$V_{n,m}$ by
\begin{equation}
\label{v-definition}
V_{n,m} := \bigoplus_{k \geq 0} (R_{m,k-1} \otimes R_{n,k})
\{ -mn + km + kn - n - k(k-1) \}.
\end{equation}
Here $M \{ -d\}$ denotes a graded module $M$ with degree shifted up by $d$
and we impose grading on tensor products by declaring
\begin{equation}
(M \otimes N)_d = \bigoplus_{i + j = d} M_i \otimes N_j.
\end{equation}
If $M$ is any $\symm_m$-module, the Hom-space 
$\Hom_{\symm_m}(M, V_{n,m})$ has the structure of a graded $\symm_n$-module.

\begin{theorem}
\label{algebraic-interpretation}
Let $n \geq 0$ and let $\nu \vdash m$ be a partition.  Define the graded 
$\symm_n$-module $R_{n,\nu}$ by
\begin{equation}
R_{n,\nu} := \Hom_{\symm_m}(S^{\nu}, V_{n,m}) \{b(\nu)\},
\end{equation}
where $V_{n,m}$ is defined as in Equation~\eqref{v-definition}.  We have
\begin{equation}
\grFrob(R_{n,\nu}; q) = (\rev_q \circ \omega) \Delta'_{s_{\nu}} e_n |_{t = 0}.
\end{equation}
\end{theorem}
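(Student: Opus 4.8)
The plan is to compute the graded Frobenius image of $R_{n,\nu} = \Hom_{\symm_m}(S^\nu, V_{n,m})\{b(\nu)\}$ directly from the definition of $V_{n,m}$ and compare with the formula of Theorem~\ref{delta-hl-expansion-theorem}. The starting point is the standard fact that for a graded $\symm_m \times \symm_n$-module $W = \bigoplus_d W_d$, one has $\grFrob\bigl(\Hom_{\symm_m}(S^\nu, W);q\bigr) = \sum_d q^d \,\langle \chi^\nu, \mathrm{ch}_{\symm_m}(W_d)\rangle$, where the inner product is taken in the $\symm_m$-variables only. Applying this to $V_{n,m}$, whose $\symm_m$-piece in each summand is $R_{m,k-1}$, the computation reduces to knowing the multiplicity of $S^\nu$ in each graded component of $R_{m,k-1}$ — equivalently, the coefficient of $s_\nu$ in $\grFrob(R_{m,k-1};q)$. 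By Equation~\eqref{r-frobenius} this Frobenius image is $(\rev_q \circ \omega)C_{m,k-1}$, and by Equation~\eqref{c-hl-expansion} together with Equation~\eqref{Q-to-s} we have $\omega C_{m,k-1} = \sum_{\rho \vdash m,\ \ell(\rho) = k-1} q^{\overline b(\rho)} {k-1 \brack m(\rho)}_q \sum_\lambda K_{\lambda,\rho}(q) s_\lambda$. Extracting the $s_\nu$-coefficient, reversing the $q$-grading, and bookkeeping the shift $\{b(\nu)\}$ should produce exactly the polynomial $q^{b(\rho)}$-weighted sum of Kostka--Foulkes polynomials $K_{\nu,\rho}(q)$ appearing in $P_{\nu,k-1}(q)$ — this is where the identity $\overline b(\rho) = b(\rho) - \binom{k-1}{2}$ for $\ell(\rho) = k-1$ gets used to reconcile the two normalizations.

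Next I would handle the $\symm_n$-side. Having replaced the $\symm_m$-factor of the $k$-th summand of $V_{n,m}$ by its $S^\nu$-multiplicity space, the remaining $\symm_n$-module is $R_{n,k}$ with a combined grading shift coming from both the internal shift $\{-mn+km+kn-n-k(k-1)\}$ in Equation~\eqref{v-definition} and the shift $\{b(\nu)\}$ in the definition of $R_{n,\nu}$. Using Equation~\eqref{r-frobenius} again, $\grFrob(R_{n,k};q) = (\rev_q \circ \omega) C_{n,k}$, and by Equation~\eqref{c-hl-expansion} its $\omega$-image is $\sum_{\mu \vdash n,\ \ell(\mu) = k} q^{\overline b(\mu)} {k \brack m(\mu)}_q Q'_\mu$. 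Thus, before applying $\rev_q$, the $k$-th contribution to $\omega\,\grFrob(R_{n,\nu};q)$ is a scalar polynomial in $q$ (the $s_\nu$-multiplicity generating function from $R_{m,k-1}$, shifted) times $\sum_\mu q^{\overline b(\mu)} {k \brack m(\mu)}_q Q'_\mu$, and summing over $k$ from $\ell(\nu)+1$ to $|\nu|+1$ — the range being forced by the constraint $\ell(\rho) = k-1$ together with $K_{\nu,\rho}(q) \neq 0$ requiring $\ell(\nu) \le \ell(\rho) \le |\nu|$ — matches the right-hand side of Theorem~\ref{delta-hl-expansion-theorem} up to the operators $\rev_q$ and $\omega$.

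The final step is to verify that the overall grading shift is correct, i.e. that after applying $\rev_q \circ \omega$ the polynomial coefficient of $Q'_\mu$ becomes exactly $P_{\nu,k-1}(q) \cdot q^{\overline b(\mu)} {k \brack m(\mu)}_q$ with no residual power of $q$. This is the main bookkeeping obstacle: $\rev_q$ is not a ring homomorphism, so one must be careful that $\rev_q$ applied to a product of a scalar $q$-polynomial with a $Q'$-expansion behaves compatibly with the individual $\rev_q$'s on $\grFrob(R_{m,k-1})$ and $\grFrob(R_{n,k})$ — this works precisely because the top degree of the tensor product is the sum of the top degrees of the factors, which is what the shift $-mn+km+kn-n-k(k-1)$ in Equation~\eqref{v-definition} is engineered to encode (here $mn - km - \binom{m}{2}_{\!*}$-type quantities are the top degrees of the Tanisaki-type quotients $R_{m,k-1}$ and $R_{n,k}$). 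I expect this degree-matching to be the crux; once the top-degree arithmetic checks out, the rest follows by assembling Equation~\eqref{r-frobenius}, Equation~\eqref{c-hl-expansion}, and Theorem~\ref{delta-hl-expansion-theorem}. A cleaner alternative, which I would pursue if the $\rev_q$ bookkeeping becomes unwieldy, is to apply $\rev_q$ to Theorem~\ref{delta-hl-expansion-theorem} first, obtaining a closed formula for $(\rev_q\circ\omega)\Delta'_{s_\nu}e_n|_{t=0}$ directly in terms of $\rev_q(Q'_\mu) = \grFrob(R_\mu;q)$, and then recognize the right-hand side summand-by-summand as $\grFrob$ of the corresponding graded Hom-space.
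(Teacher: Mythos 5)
Your proposal is essentially correct and uses the same ingredients as the paper: Theorem~\ref{delta-hl-expansion-theorem}, Equation~\eqref{c-hl-expansion}, Equation~\eqref{r-frobenius}, and the identification of $\grFrob(\Hom_{\symm_m}(S^\nu, V);q)$ with the coefficient of $s_\nu(\yy)$ in $\grFrob(V;q)$. The ``cleaner alternative'' you mention at the end is in fact the paper's actual route: the paper first sums against $s_\nu(\yy)$ over all $\nu \vdash m$ to form a two-variable generating function (Proposition~\ref{omega-to-c}), applies $\rev_q$ once globally (Proposition~\ref{omega-to-d}), and then extracts the $s_\nu(\yy)$-coefficient. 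That packaging is what keeps the $\rev_q$ bookkeeping honest: in your primary plan, note that the coefficient of $s_\nu$ in $D_{m,k-1}=\rev_q\,\omega\, C_{m,k-1}$ is \emph{not} the $q$-reversal of the coefficient of $s_\nu$ in $\omega C_{m,k-1}$, since the reversal is taken with respect to the top degree of the whole symmetric function, not of each $s_\nu$-coefficient; you must track that offset, which is where the $q^{b(\nu)}$ shift comes from. The step you correctly single out as the crux --- pinning down the $q$-degree of $\Delta'_{s_\nu}e_n|_{t=0}$ so that $\rev_q$ and the shift $\{b(\nu)\}$ interact as claimed --- is exactly the Claim established inside the proof of Proposition~\ref{omega-to-d}: that degree equals $(n-1)m-b(\nu)$, maximized over $\nu \vdash m$ at $\nu=(m)$. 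Once you prove that, your plan goes through and matches the paper's argument.
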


The module $V_{n,m}$ only depends on the integers $n$ and $m = |\nu|$, so that
we may regard $V_{n,m}$ as a universal `generator' for coinvariant algebras
corresponding to partitions $\nu \vdash m$.
Since the $R_{n,k}$ modules are `larger' than the Tanisaki quotients $R_{\mu}$,
this gives a still less artificial solution to finding a `coinvariant algebra'
attached to the operator $\Delta'_{s_{\nu}}$.
This also suggests that understanding the algebraic and geometric properties
of objects related to $\Delta'_{s_{\nu}}$ at $t = 0$ may be deduced from the case where 
$\nu$ is a single column.

\section{Background}
\label{Background}

\subsection{Symmetric functions}
We adopt standard symmetric function terminology which may be found in e.g. 
\cite{Hagbook, Macdonald}.
Given a partition $\lambda$, let 
\begin{equation*}
e_{\lambda} = e_{\lambda}(\xx), \quad 
h_{\lambda} = h_{\lambda}(\xx), \quad
s_{\lambda} = s_{\lambda}(\xx), \quad 
Q'_{\lambda} = Q'_{\lambda}(\xx;q), \quad
\widetilde{H}_{\lambda} = \widetilde{H}_{\lambda}(\xx;q,t)
\end{equation*}
be the associated {\em elementary, homogeneous, Schur,}  {\em dual 
Hall-Littlewood}, and {\em modified Macdonald symmetric function}.  
The functions $Q'_{\mu}$ expand positively
in the Schur basis.  If $\mu \vdash n $, the transition coefficients 
$K_{\lambda,\mu}(q) \in \ZZ_{\geq 0}[q]$ given by
\begin{equation}
Q'_{\mu} = \sum_{\lambda \vdash n} K_{\lambda,\mu}(q) s_{\lambda}
\end{equation}
are the {\em Kostka-Foulkes polynomials}.
The following relationship between the modified Macdonald symmetric functions and
the dual Hall-Littlewood functions is well known:
\begin{equation}
\widetilde{H}_{\lambda}|_{t = 0} = \rev_q (Q'_{\lambda}).
\end{equation}

If $S \subseteq [n-1]$, we let $F_{n,S}$ be the associated {\em fundamental quasisymmetric
function} of degree $n$ given by
\begin{equation}
F_{n,S} := \sum_{\substack{i_1 \leq \cdots \leq i_n \\ (j \in S) \Rightarrow (i_j < i_{j+1}}}
x_{i_1} \cdots x_{i_n}.
\end{equation}

We adopt the usual $q$-analogs of numbers, factorials, binomial
coefficients, and multinomial coefficients:
\begin{equation}
\begin{cases}
[n]_q := 1 + q + q^2 + \cdots + q^{n-1} & n \geq 0  \\
[n]!_q := [n]_q [n-1]_q \cdots [1]_q & n \geq 0 \\
{n \brack k}_q := \frac{[n]!_q}{[k]!_q [n-k]!_q}  & n \geq k \geq 0 \\
{n \brack a_1, \dots, a_k}_q  := \frac{[n]!_q}{[a_1]!_q \cdots [a_k]!_q} & a_1 + \cdots + a_k = n.
\end{cases}
\end{equation}

If $\lambda = (\lambda_1, \dots, \lambda_k)$ is a partition, we let $\ell(\lambda) = k$
be the number of parts of $\lambda$, let 
$|\lambda| = \lambda_1 + \cdots + \lambda_k$ be the sum of the parts of $\lambda$, 
and set
\begin{align}
b(\lambda) &:= \sum_{i = 1}^k \lambda_i \cdot (i-1) \\
\overline{b}(\lambda) &:=
b(\lambda) - {\ell(\lambda) \choose 2} = \sum_{i = 1}^k (\lambda_i - 1)(i-1).
\end{align}
We let $m_i(\lambda)$ denote the multiplicity of $i$ as a part of $\lambda$
and adopt the $q$-multinomial coefficient shorthand
\begin{equation}
{\ell(\lambda) \brack m(\lambda}_q := 
{\ell(\lambda) \brack m_1(\lambda), m_2(\lambda), \dots }_q.
\end{equation}

Let $\omega$ be the involution on $\Lambda$ which interchanges $e_n$ and $h_n$.
We will use the following `twisted' version of the polynomials $C_{n,k}$ for $k \leq n$:
\begin{equation}
D_{n,k} := (\rev_q \circ \omega) C_{n,k}.
\end{equation}

Let $\langle \cdot, \cdot \rangle$ be the {\em Hall inner product}
on $\Lambda$ defined by the declaring the Schur functions to be
orthonormal:
$\langle s_{\lambda}, s_{\mu} \rangle = \delta_{\lambda, \mu}$.
For any symmetric function $f$, the operator $f^{\perp}: \Lambda \rightarrow \Lambda$
is the dual operator to multiplication by $f$ under the Hall inner product.
Said differently, the operator $f^{\perp}$ is characterized by 
\begin{equation}
\langle f^{\perp} g, h \rangle = \langle g, fh \rangle,
\end{equation}
for all $g, h \in \Lambda$.
For a proof of the following standard fact, see for example \cite[Lem. 3.6]{HRS}.

\begin{lemma}
\label{e-perp-lemma}
Let $f, g \in \Lambda$ be symmetric functions with equal constant terms.  We have
that $f = g$ if and only if $e_j^{\perp} f = e_j^{\perp} g$ for all $j \geq 1$.
\end{lemma}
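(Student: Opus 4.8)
The plan is to reduce the nontrivial direction to a single pairing computation, using only the structural fact that the elementary symmetric functions $e_1, e_2, \dots$ generate the augmentation ideal $\mathfrak{m} := \bigoplus_{n \geq 1} \Lambda_n$ of symmetric functions with vanishing constant term; this is immediate because $\Lambda = \QQ(q,t)[e_1, e_2, \dots]$ is a polynomial algebra on the $e_j$.

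The forward implication is trivial by linearity of $e_j^{\perp}$. For the converse I would set $h := f - g$; by hypothesis $h$ has zero constant term, so $h \in \mathfrak{m}$, and $e_j^{\perp} h = 0$ for every $j \geq 1$, and the goal is $h = 0$. It is enough to show $\langle h, s_{\lambda} \rangle = 0$ for every partition $\lambda$, since these are the Schur coefficients of $h$ (the case $\lambda = \emptyset$ recording the constant term, already known to vanish). So fix $\lambda \neq \emptyset$. Then $s_{\lambda}$ is homogeneous of positive degree, hence $s_{\lambda} \in \mathfrak{m} = \langle e_1, e_2, \dots \rangle$, and I would write $s_{\lambda} = \sum_{j \geq 1} e_j p_j$ for suitable $p_j \in \Lambda$ (a finite sum). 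The defining adjunction $\langle e_j^{\perp} x, y \rangle = \langle x, e_j y \rangle$ now gives
\begin{equation*}
\langle h, s_{\lambda} \rangle = \sum_{j \geq 1} \langle h, e_j p_j \rangle = \sum_{j \geq 1} \langle e_j^{\perp} h, p_j \rangle = 0 ,
\end{equation*}
so all Schur coefficients of $h$ vanish, whence $h = 0$ and $f = g$.

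I do not anticipate a genuine obstacle; the one point deserving care is the identity $\langle e_1, e_2, \dots \rangle = \mathfrak{m}$, which is nothing more than the freeness of $\Lambda$ as a polynomial ring on $\{e_j\}_{j \geq 1}$. An essentially equivalent route is to note that $e_j^{\perp} h = 0$ for all $j$ says precisely $\langle h, \mathfrak{m} \rangle = 0$, then pair $h \in \mathfrak{m}$ with itself, using that a sum of squares vanishes in $\QQ(q,t)$ only if every term does. Finally, I would remark that the constant-term hypothesis is sharp: the computation above shows $e_j^{\perp} f = e_j^{\perp} g$ for all $j$ exactly when $f$ and $g$ differ by a scalar, so equality of constant terms is precisely what promotes ``differ by a constant'' to ``equal.''
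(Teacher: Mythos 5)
The paper does not prove Lemma~\ref{e-perp-lemma} itself; it cites \cite[Lem.~3.6]{HRS} for this standard fact, so there is no in-paper argument to compare against. Your proof is correct and is essentially the canonical one: the forward direction is immediate, and for the converse you reduce to showing that $h:=f-g$, which lies in the augmentation ideal $\mathfrak m=\langle e_1,e_2,\dots\rangle$ and is annihilated by every $e_j^\perp$, is orthogonal to every Schur function. The key structural point --- that $\mathfrak m$ coincides with the ideal generated by the $e_j$, by freeness of $\Lambda$ as a polynomial ring on the $e_j$ --- is exactly right, and the adjunction $\langle e_j^\perp h,p_j\rangle=\langle h,e_j p_j\rangle$ then does all the work. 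A minor stylistic alternative (and possibly closer to how such lemmas are usually phrased) is to observe that iterating gives $e_\mu^\perp h=0$ for all nonempty $\mu$, hence $\langle h,e_\mu\rangle$, the constant term of $e_\mu^\perp h$, vanishes for all $\mu\neq\emptyset$; since $\{e_\mu\}$ is a basis and $\langle h,1\rangle=0$ by hypothesis, $h=0$. This is the same argument in different clothing. Your concluding remark about the sharpness of the constant-term hypothesis is also correct.
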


Lemma~\ref{e-perp-lemma} will be used to form the recursions which underly our 
new proof of the Delta Conjecture at $t = 0$.
The image of the $D_{n,k}$ functions under $e_j^{\perp}$ can be
recursively described as follows.

\begin{lemma}
\label{d-e-perp-lemma}
(H.-R.-S. \cite[Lem. 3.7]{HRS})
Let $k \leq n$ be positive integers and let $j \geq 1$.  We have
\begin{equation}
e_j^{\perp} D_{n,k} = 
q^{{j \choose 2}} {k \brack j}_q \cdot
\sum_{m = \max(1,k-j)}^{\min(k,n-j)}
q^{(k-m) \cdot (n-j-m)}
{j \brack k-m}_q D_{n-j,m}.
\end{equation}
\end{lemma}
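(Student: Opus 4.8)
\textbf{Plan of proof for Lemma~\ref{d-e-perp-lemma}.}
The strategy is to establish the formula by an entirely computational route that stays within the ring of symmetric functions, using the Hall-Littlewood expansion of $\omega C_{n,k}$ from Equation~\eqref{c-hl-expansion} and the known behavior of the $Q'$-basis under multiplication by $h_j$ (equivalently, of the ``primed'' Hall-Littlewood functions $P_\mu$ under Pieri rules). First I would unwind the definitions: since $D_{n,k} = (\rev_q \circ \omega) C_{n,k}$ and $\omega C_{n,k} = \sum_{\mu \vdash n,\, \ell(\mu)=k} q^{\overline b(\mu)} {k \brack m(\mu)}_q Q'_\mu$, applying $\rev_q$ turns each $Q'_\mu(\xx;q)$ into $\widetilde H_\mu(\xx;q,0)$ and reverses the coefficient polynomials ${k \brack m(\mu)}_q$ and the monomial $q^{\overline b(\mu)}$. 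So $D_{n,k}$ is an explicit nonnegative-coefficient combination of the $\widetilde H_\mu|_{t=0}$'s over $\mu \vdash n$ with exactly $k$ parts. I would record the precise reversed coefficients (the degree of the coefficient polynomial ${k \brack m(\mu)}_q$ is $\overline b(\mu^{\mathrm{rev}})$-type data, which must be tracked carefully) and then reduce the claim to showing that $e_j^\perp$ acts on this combination in the stated way.

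The key computational input is the action of $e_j^\perp$ on the dual Hall-Littlewood basis. Dually to the Pieri rule $e_j \cdot P_\lambda = \sum_\mu (\text{coeff}) P_\mu$ (sum over $\mu$ obtained from $\lambda$ by adding a vertical $j$-strip), one has $e_j^\perp Q'_\mu = \sum_\rho (\text{coeff}) Q'_\rho$ where $\rho$ is obtained from $\mu$ by removing a vertical $j$-strip, with an explicit $q$-binomial coefficient attached. The plan is therefore: (1) transport everything through $\rev_q \circ \omega$, so that $e_j^\perp$ becomes the appropriate (possibly $\omega$-twisted) skewing operator acting on the $\widetilde H_\mu|_{t=0}$ side; (2) apply the explicit branching formula; (3) reindex the resulting double sum over pairs $(\mu,\rho)$ with $\mu \vdash n$, $\ell(\mu)=k$, $\rho \vdash n-j$, $\rho \subseteq \mu$, $\mu/\rho$ a vertical $j$-strip, by the number of parts $m = \ell(\rho)$; (4) for fixed $m$, collect all the $q$-powers — the grading shift $q^{(k-m)(n-j-m)}$, the prefactor $q^{\binom{j}{2}}{k \brack j}_q$, and the inner $q^{\binom{j}{2}-\ldots}{j \brack k-m}_q$ factor should emerge after the $q$-multinomial coefficients ${k \brack m(\mu)}_q$ and ${m \brack m(\rho)}_q$ and the $\overline b$ monomials are combined. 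The constraint that a vertical $j$-strip added to a shape with $m$ parts yields a shape with between $m$ and $m+1$ parts, intersected with $\ell(\mu)=k$ and the column lengths being $\le n$, gives the summation range $\max(1,k-j) \le m \le \min(k,n-j)$.

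The main obstacle will be Step (4): the bookkeeping of $q$-exponents and $q$-multinomial ratios. Extracting the clean closed form requires a $q$-binomial identity of Vandermonde type — specifically, when one sums over all ways to distribute the $j$ new cells of the vertical strip among the rows of $\mu$ compatibly with a fixed part-count $m$ for $\rho$, the weighted count must telescope into the single product $q^{\binom j 2 - \binom{k-m}{2} \text{-ish}}{k \brack j}_q {j \brack k-m}_q$ times the grading monomial. I expect this to follow from the standard identity ${k \brack j}_q {j \brack i}_q = {k \brack i}_q {k-i \brack j-i}_q$ together with a $q$-Chu--Vandermonde summation absorbing the part-multiplicity multinomials, but verifying that the $\overline b(\mu) - \overline b(\rho)$ differences and the $b(\mu)/b(\rho)$ contributions line up exactly with the claimed $(k-m)(n-j-m)$ shift is the delicate part. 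As a sanity check I would verify the formula in the boundary cases $j=1$ (where it should reduce to a simple recursion already implicit in \cite{HRS} or \cite{Rhoades}) and $k=n$ (where $D_{n,n}$ relates to the classical $\nabla e_n$ at $t=0$), and also confirm the total degree and the specialization $q=1$ (where all $q$-binomials become ordinary binomials and the identity should reduce to a classical vertical-strip Pieri count).
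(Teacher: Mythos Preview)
The present paper does not prove Lemma~\ref{d-e-perp-lemma}; it is quoted from \cite[Lem.~3.7]{HRS}. The paper's remark preceding Lemma~\ref{c-e-perp-recursion} (``The proof of Lemma~\ref{c-e-perp-recursion} should be compared with that of Lemma~\ref{d-e-perp-lemma}'') tells you how the cited proof goes: it is combinatorial, parallel to the argument you see written out for Lemma~\ref{c-e-perp-recursion}. One starts from a quasisymmetric expansion of $D_{n,k}$ over $\OP_{n,k}$ weighted by a coinversion-type statistic, uses superization to identify $\langle e_j^\perp D_{n,k}, h_{\alpha_2}\cdots h_{\alpha_p}\rangle$ with a sum over ordered set partitions whose reading word satisfies a shuffle condition, and then analyzes directly how inserting $j$ ``big'' letters into $\sigma\in\OP_{n-j,m}$ changes the statistic. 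No Hall--Littlewood machinery enters.

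Your route through Equation~\eqref{c-hl-expansion} and Pieri rules for the $Q'$-basis is therefore genuinely different. Two comments. First, and most seriously, there is a circularity issue: in \cite{HRS} the Hall--Littlewood expansion~\eqref{c-hl-expansion} is \emph{derived from} the $e_j^\perp$-recursion of Lemma~\ref{d-e-perp-lemma} (via the characterization in Lemma~\ref{e-perp-lemma} and induction on $n$). So taking~\eqref{c-hl-expansion} as input to prove Lemma~\ref{d-e-perp-lemma} reverses the original logical dependence. Within the framework of the present paper this is harmless, since both facts are imported as prior results; but as a self-contained proof it is circular unless you supply an independent derivation of~\eqref{c-hl-expansion}.

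Second, your Step~(4) is more than bookkeeping plus a single $q$-Vandermonde. What you actually need is that for every fixed $\rho\vdash n-j$ with $\ell(\rho)=m$, the sum over all $\mu\supset\rho$ with $\mu/\rho$ a vertical $j$-strip and $\ell(\mu)=k$ of $(\text{coeff.\ of }Q'_\mu\text{ in }\omega C_{n,k})\times(\text{Pieri coeff.})$ equals the asserted scalar times the coefficient of $Q'_\rho$ in $\omega C_{n-j,m}$. This is a family of identities indexed by $\rho$, not one identity. It does collapse --- both the multinomial ${k\brack m(\mu)}_q$ and the vertical-strip Pieri coefficients factor over the distinct part sizes, so the sum over $\mu$ becomes a product of independent sums over how many cells the strip removes from each column length --- but isolating that factorization is the real content, and your sketch stops just short of it. The combinatorial proof in \cite{HRS} avoids all of this by working at the level of individual ordered set partitions, which is what makes it cleaner.
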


The irreducible representations of the symmetric group $\symm_n$ over $\QQ$
are indexed by partitions $\lambda \vdash n$.  If $\lambda$ is a partition, we let 
$S^{\lambda}$ denote the corresponding irreducible representation.  If $V$ is any
finite-dimensional $\symm_n$-module, there exist unique integers 
$c_{\lambda} \geq 0$ such that 
$V \cong_{\symm_n}  \bigoplus_{\lambda \vdash n} c_{\lambda} S^{\lambda}$.
The {\em Frobenius image} $\Frob(V) \in \Lambda_n$ is the symmetric function
\begin{equation}
\Frob(V) := \sum_{\lambda \vdash n} c_{\lambda} s_{\lambda}.
\end{equation}
More generally, if $V = \bigoplus_{d \geq 0} V_d$ is a graded $\symm_n$-module
with each $V_d$ finite-dimensional, the {\em graded Frobenius image}
$\grFrob(V)$ is 
\begin{equation}
\grFrob(V) = \sum_{d \geq 0} \Frob(V_d) \cdot q^d.
\end{equation}

\subsection{Ordered set partitions}
If $\pi = \pi_1 \pi_2 \cdots \pi_n \in \symm_n$ is a permutation
(written in one-line notation), the {\em descent set} of $\pi$ is
\begin{equation}
\Des(\pi) := \{ 1 \leq i \leq n-1 \,:\, \pi_i > \pi_{i+1} \}
\end{equation}
and the {\em inverse descent set} is $\iDes(\pi) := \Des(\pi^{-1})$.

Let $k \leq n$ be positive integers.  An {\em ordered set partition of size $n$ with $k$ blocks}
is a sequence $\sigma = (B_1 \mid \cdots \mid B_k)$ of $k$ nonempty subsets of 
$[n]$ such that we have a disjoint union decomposition 
$[n] = B_1 \uplus \cdots \uplus B_k$.  
Let $\OP_{n,k}$ be the family of ordered set partitions of size $n$ 
with $k$ blocks. As an example, we have
$\sigma = (27 \mid 135 \mid 46) \in \OP_{7,3}$.  There is a natural identification
$\OP_{n,n} = \symm_n$ of ordered set partitions of size $n$ with $n$ blocks
and permutations in the symmetric group on $n$ letters.

Let $\sigma = (B_1 \mid \cdots \mid B_k) \in \OP_{n,k}$ be an ordered set partition.
An {\em inversion} in $\sigma$ is  a pair $1 \leq i < j \leq n$ such that 
\begin{itemize}
\item
$i$'s block
is strictly to the right of $j$'s block in $\sigma$ and 
\item $i$ is minimal in its block.
\end{itemize}
We let $\inv(\sigma)$ be the number of inversions of $\sigma$.
For example, if $\sigma = (27 \mid 135 \mid 46)$, the inversions of 
$\sigma$ are $12, 17, 47,$ and $45$ so that $\inv(\sigma) = 4$.

If $\sigma = (B_1 \mid \cdots \mid B_k) \in \OP_{n,k}$, the 
{\em reading word} $\rword(\sigma)$ is the permutation 
$\pi_1 \pi_2 \cdots \pi_n \in \symm_n$ obtained by reading 
$\sigma$ along `diagonals' from left to right (where the $m^{th}$
`diagonal' is  the set of elements which are $m^{th}$ largest in their block).
As an example, we have
\begin{equation*}
\rword(27 \mid 135 \mid 45) = 5736214.
\end{equation*}

We have (see \cite{HRW} or \cite[Eq. 2.20]{HRS}) the following quasisymmetric
expansion of $C_{n,k}$ in terms of ordered set partitions:
\begin{equation}
C_{n,k} = \sum_{\sigma \in \OP_{n,k}} q^{\inv(\sigma)} F_{n, \iDes(\rword(\sigma))}.
\end{equation}

\subsection{Hypergeometric functions}
Given a continuous parameter $x$ and an integer $k \geq 0$, the
{\em Pochhammer symbol} is
\begin{equation}
(x)_k = (x;q)_k = (1-x)(1-xq) \cdots (1-xq^{k-1}).
\end{equation}
We adopt the abbreviation
\begin{equation}
\prod_{i = 1}^j (a_i)_k := (a_1, a_2, \dots, a_k)_j.
\end{equation}

If $r, s \geq 0$ are nonnegative integers and $\alpha_1, \dots, \alpha_r$
and $\beta_1, \dots, \beta_s$ are parameters, the 
corresponding {\em $q$-hypergeometric series} is
\begin{equation}
{ _r\phi _s} \left ( \begin{matrix} \alpha_1, \dots, \alpha_r \\ 
\beta_1, \dots, \beta_s \end{matrix} ; q, z \right) :=
\sum_{n = 0}^{\infty} 
\frac{ (\alpha_1; q)_n \cdots (\alpha_r; q)_n}
{ (\beta_1; q)_n \cdots (\beta_s; q)_n}
\frac{z^n}{(q;q)_n} 
\left[ (-1)^n q^{{n \choose 2}} \right]^{1+s-r}.
\end{equation}
In this paper we will only be concerned with the ${_3\phi_2}$-functions.

\section{Polynomial identities}
\label{Polynomial}

In this section we will prove symmetric function and hypergeometric identities
which will be used in our proof of the Delta Conjecture at $t = 0$, and ultimately
in our proof of Theorem~\ref{delta-hl-expansion-theorem}.
The first of these is a recursive description of the image of $C_{n,k}$ under the operator
$e_j^{\perp}$.

\begin{lemma}
\label{c-e-perp-recursion}
Let $j \geq 1$ and $k \leq n$.  We have
\begin{equation}
e_j^{\perp} C_{n,k} =
\sum_{r = 0}^j q^{r \choose 2} {k \brack r}_q {k + j - r - 1 \brack j - r}_q  C_{n-j,k-r}.
\end{equation}
\end{lemma}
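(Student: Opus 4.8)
The plan is to deduce this identity from Lemma~\ref{d-e-perp-lemma}, which gives the analogous recursion for the twisted polynomials $D_{n,k} = (\rev_q \circ \omega) C_{n,k}$, by applying the operator $\rev_q \circ \omega$ to the desired statement and matching terms. Since $\omega$ is an involution commuting with nothing in particular but $e_j^{\perp}$ transforms predictably, the first step is to record how $e_j^{\perp}$ interacts with $\omega$: one has $\omega \circ e_j^{\perp} = h_j^{\perp} \circ \omega$, and more to the point the skewing operators $e_j^\perp$ and $h_j^\perp$ are related to each other on the $C$ versus $D$ sides only through a reversal of the grading. The cleanest route is probably to avoid $\omega$ entirely and instead verify the identity directly in the dual Hall-Littlewood / Macdonald framework, but since the excerpt has already packaged everything we need into Lemma~\ref{d-e-perp-lemma}, I would translate that lemma through $\rev_q \circ \omega$.

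Concretely, I would proceed as follows. First, apply $\rev_q \circ \omega$ to both sides of the claimed identity for $e_j^\perp C_{n,k}$ and use the (to-be-checked) commutation rule between $\rev_q \circ \omega$ and $e_j^\perp$ — this will turn the left side into something expressible via $e_j^\perp D_{n,k}$ up to an explicit power of $q$ coming from the degree shift under $\rev_q$ (the degree of $C_{n,k}$ in the ambient graded ring is known, so this shift is bookkeeping). Second, substitute the right side of Lemma~\ref{d-e-perp-lemma} for $e_j^\perp D_{n,k}$. Third, reindex the sum on the $D$ side — which runs over $m$ from $\max(1,k-j)$ to $\min(k,n-j)$ with summand $q^{\binom{j}{2}}\binom{k}{j}_q q^{(k-m)(n-j-m)}\binom{j}{k-m}_q D_{n-j,m}$ — via the substitution $r = k - m$, so that $m = k - r$ and the sum runs over $r$ from $\max(0, k-n+j)$ down appropriately; the range $0 \le r \le j$ then falls out from $\binom{j}{k-m}_q = \binom{j}{r}_q$ vanishing outside it. Fourth, match the resulting $q$-powers and $q$-binomial products against $q^{\binom{r}{2}}\binom{k}{r}_q \binom{k+j-r-1}{j-r}_q$, which is the coefficient asserted in the lemma. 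The key $q$-binomial identity to verify here is that $\binom{j}{2} + \binom{j}{r}_q$-type factors combine with the degree-reversal shifts to produce $\binom{k+j-r-1}{j-r}_q$; this is a standard $q$-Vandermonde / reversal manipulation, namely $\rev_q$ applied to $q^{\binom{j}{2}}\binom{j}{k-m}_q$ over the relevant degree window yields $\binom{k+j-r-1}{j-r}_q$ after absorbing the $q^{(k-m)(n-j-m)}$ factor against the $\rev_q$ on $D_{n-j,m}$.

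The main obstacle I anticipate is precisely the bookkeeping of $q$-powers under $\rev_q$: because $\rev_q$ reverses coefficient sequences, one must know the exact top degree of each $C_{n,k}$ and of each $D_{n-j,m}$ to see how the grading-shift exponents $(k-m)(n-j-m)$ and $\binom{j}{2}$ redistribute, and an off-by-one in any of these degrees will spoil the final match. I would handle this by first proving the special case $j = 1$ (where the sum has only two terms, $r=0$ and $r=1$, and the identity reads $e_1^\perp C_{n,k} = C_{n-1,k} + [k]_q\, C_{n-1,k-1}$), checking it against Lemma~\ref{d-e-perp-lemma} at $j=1$ to pin down all conventions, and only then running the general argument. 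As a sanity check at the end, I would verify the identity is consistent with the expected constant term and with the total-degree grading on both sides. An alternative, if the $\rev_q \circ \omega$ translation proves too delicate, is to work directly with the known formula $C_{n,k} = \sum_{\mu \vdash n,\ \ell(\mu)=k} q^{\overline b(\mu)} \binom{k}{m(\mu)}_q \omega Q'_\mu$ from Equation~\eqref{c-hl-expansion}, compute $e_j^\perp$ of the dual Hall-Littlewood functions via the known Pieri-type rule for $h_j^\perp Q'_\mu$, and reorganize — but I expect the route through Lemma~\ref{d-e-perp-lemma} to be shorter since that lemma has already done the hard combinatorial work.
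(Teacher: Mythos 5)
Your main route has a genuine gap that a sanity check would not catch at $j=1$ but would fail for all $j\geq 2$. You correctly record that $\omega\circ e_j^{\perp}=h_j^{\perp}\circ\omega$, but then do not follow this through: because $D_{n,k}=(\rev_q\circ\omega)\,C_{n,k}$, the translation of Lemma~\ref{d-e-perp-lemma} across $\rev_q\circ\omega$ turns its formula for $e_j^{\perp}D_{n,k}$ into a formula for $h_j^{\perp}C_{n,k}$, not for $e_j^{\perp}C_{n,k}$. Concretely, $e_j^{\perp}C_{n,k}=e_j^{\perp}\omega\,\rev_q D_{n,k}=\omega\,h_j^{\perp}\rev_q D_{n,k}$, and since $\rev_q$ acts only on $q$ it commutes with the skewing to give $\omega\,\rev_q\,h_j^{\perp}D_{n,k}$. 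So to carry out your plan you would need a recursion for $h_j^{\perp}D_{n,k}$, which neither Lemma~\ref{d-e-perp-lemma} nor anything else in the paper supplies; there is no general relation between $e_j^{\perp}D$ and $h_j^{\perp}D$ for a given symmetric function $D$. Your proposed sanity check at $j=1$ would deceptively succeed, since $e_1=h_1$ forces $e_1^{\perp}=h_1^{\perp}$ (incidentally, the correct $j=1$ instance of the lemma is $e_1^{\perp}C_{n,k}=[k]_q\,C_{n-1,k}+[k]_q\,C_{n-1,k-1}$, not $C_{n-1,k}+[k]_q C_{n-1,k-1}$ as you wrote), but the method breaks down from $j=2$ onward precisely because the operator conjugates to the wrong thing.

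The paper does not attempt this translation at all. It proves the lemma directly from the quasisymmetric expansion $C_{n,k}=\sum_{\sigma\in\OP_{n,k}}q^{\inv(\sigma)}F_{n,\iDes(\rword(\sigma))}$, pairing with $e_j e_{\alpha_2}\cdots e_{\alpha_p}$ and running a combinatorial argument that classifies the ways of inserting $j$ ``big'' letters into an ordered set partition $\sigma\in\OP_{n-j,k-r}$ to get one in $\OP_{n,k}$, tracking the inversion contributions of the inserted letters (minimal and non-minimal in their blocks) to produce exactly the factor $q^{\binom{r}{2}}\binom{k}{r}_q\binom{k+j-r-1}{j-r}_q$. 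This is the same \emph{style} as the proof of Lemma~\ref{d-e-perp-lemma} in \cite{HRS}, but carried out fresh for $C_{n,k}$; crucially, the statistic and insertion rule differ enough that the coefficient is $\binom{k+j-r-1}{j-r}_q$ (a $q$-multichoose, reflecting that non-minimal big letters can be placed with repetition) rather than the $\binom{j}{k-m}_q$ appearing on the $D$ side. Your fallback suggestion of computing $e_j^{\perp}$ against the $Q'$ expansion of $\omega C_{n,k}$ is a plausible independent route, but it is not what the paper does and you did not develop it.
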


The proof of Lemma~\ref{c-e-perp-recursion} should be compared with that of
Lemma~\ref{d-e-perp-lemma} ( = \cite[Lem 3.7]{HRS}).

\begin{proof}
We start with the quasisymmetric
expansion of $C_{n,k}$ in terms of ordered set partitions:
\begin{equation}
C_{n,k} = \sum_{\sigma \in \OP_{n,k}} q^{\inv(\sigma)} F_{n, \iDes(\rword(\sigma))}.
\end{equation}

Let $\alpha = (\alpha_1, \alpha_2, \dots, \alpha_p)$ be any (strict) composition
of $n$.  General facts about superization (see \cite{Hagbook})
imply
\begin{equation}
\langle C_{n,k}, e_{\alpha_1} e_{\alpha_2} \cdots e_{\alpha_p} \rangle =
\sum_{\substack{\sigma \in \OP_{n,k} \\ \rword(\sigma) \text{ is an $\alpha$-shuffle}}}
q^{\inv(\sigma)}.
\end{equation}
The $\alpha$-shuffle condition means that the sequence $\rword(\sigma)$ is a shuffle of the $p$ decreasing sequences
$$
(\alpha_1, \dots, 2, 1), (\alpha_1 + \alpha_2, \dots, \alpha_1 + 2, \alpha_1 + 1), \dots, 
(n, n-1, \dots, \alpha_1 + \cdots + \alpha_{p-1} + 1).
$$
We are interested in the case where $\alpha_1 = j$, so that 
\begin{equation}
\langle C_{n,k}, e_{\alpha_1} e_{\alpha_2} \cdots e_{\alpha_p} \rangle =
\langle C_{n,k}, e_{j} e_{\alpha_2} \cdots e_{\alpha_p} \rangle =
\langle  e_j^{\perp} C_{n,k}, e_{\alpha_2} \cdots e_{\alpha_p} \rangle.
\end{equation}
As in the proof of \cite[Lem. 3.7]{HRS}, we give a combinatorial interpretation of this expression.

Fix an index $0 \leq r \leq k-1$ and consider $\sigma \in \OP_{n-j,k-r}$.  Let $T$ be a way of adding the $j$
letters $\{n-j+1, \dots, n\}$ to $\sigma$ (the {\em big} letters) in such a way that the resulting ordered set partition
$\sigma'$ has $k$ blocks and the big letters appear in $\rword(\sigma')$ in the order 
$n, n-1, \dots, n-j+1$.  
An example of such a way $T$ for $n = 9, k = 5, j = 4, r = 2$ is shown below, with the big letters in bold:
$$
(4 \mid 15 \mid 23) \leadsto
( 4   \mid 1 5 {\bf 9 } \mid {\bf 7} {\bf 8} \mid 2 3  \mid {\bf 6} ).
$$
Notice that exactly $r$ of the big letters to be minimal blocks of $\sigma'$,
and these minimal  letters must be (from left to right)
$$
n - j + r , \dots, n - j + 2 ,n - j + 1 .
$$

Let us consider the effect on the $\inv$ statistic of all possible ways $T$ of producing $\sigma'$ from $\sigma$.
Call the letters $1, 2, \dots, n-j$ of $\sigma'$ which are not big {\em small}.
Following the notation of \cite{HRS}, let us call a letter $i$ of $\sigma'$
\begin{itemize}
\item  $mins$ if $i$ is small and minimal in its block,
\item  $minb$ if $i$ is big and minimal in its block,
\item  $nmins$ if $i$ is small and not minimal in its block, and
\item  $nminb$ if $i$ is big and not minimal in its block.
\end{itemize}
We observe the following.
\begin{itemize}
\item
The $r$ letters $n-j+r, \dots, n-j+1$ are precisely the $minb$ letters for any way $T$, and they contribute amongst 
themselves ${r \choose 2}$ inversions in $\sigma'$.
\item
The ${k \choose r}$ ways of distributing the $minb$ letters among the blocks of $\sigma'$ generate inversions
with the $mins$ letters, contributing to a factor of ${k \brack r}_q$ in the generating function for $\inv(\sigma')$ when we sum
over all ways $T$.
\item  
The values of the $nminb$ letters are completely determined by which block they are added to.
There are $j - r$ letters which are $nminb$ and they may be added to any of the $k$ blocks
of $\sigma$ (upon addition
of the $minb$ letters), with multiplicity.  This gives ${k + j - r - 1 \choose j - r}$ choices for distributing the $nminb$ letters in $T$.
The inversions contributed between the $nminb$ letters and the $mins$ letters generate a factor of
${k + j - r - 1 \brack j - r}_q$ to the generating function for $\inv(\sigma')$ when we sum over all ways $T$.
\end{itemize}

By the last paragraph, we have
\begin{equation}
\sum_{T: \sigma \leadsto \sigma'} q^{\inv(\sigma')} = q^{\inv(\sigma) + {r \choose 2}} {k \brack r}_q {k + j - r - 1 \brack j - r}_q,
\end{equation}
where the sum is over all ways $T$ of producing $\sigma'$ from $\sigma$.  If we sum this expression over all $\sigma$
with $\rword(\sigma)$ an $(\alpha_2, \dots, \alpha_p)$-shuffle, and then over all $r$, we get the inner product
\begin{equation}
\left\langle \sum_{r = 0}^j q^{r \choose 2} {k \brack r}_q {k + j - r - 1 \brack j - r}_q  C_{n-j,k-r}, e_{\alpha_2} \cdots e_{\alpha_p}
\right\rangle,
\end{equation}
which is also equal to
\begin{equation}
\left\langle e_j^{\perp} C_{n,k}, 
e_{\alpha_2} \cdots e_{\alpha_p}
\right\rangle,
\end{equation}
completing the proof.
\end{proof}

We will  need the theory of hypergeometric series for our proof of
Equation~\eqref{delta-zero-equation}.  In particular, we 
have the following transformation of the  ${ _3 \phi_2}$ basic 
hypergeometric series
(see \cite{AndrewsAskeyRoy} for background on basic 
hypergeometric series).

\begin{lemma}
\label{hypergeometric-lemma}
Let $j \in \NN$ and $\alpha, x, y, z \in \RR$.  We have
\begin{align}
\label{trans}
{ _3\phi _2} \left ( \begin{matrix} q^{-j},& q^{\alpha},& q^{\alpha +z} \\ & q^{\alpha-y-j+1}, &q^{\alpha-x-j+1} \end{matrix} ;q,q\right ) \\
\notag =
\frac { (q^{-y-j+1})_j (q^{-x-j+1})_j }{ (q^{\alpha -y-j+1})_j(q^{\alpha -x-j+1})_j} 
\frac {(q^{-y-z-j+1})_j}{(q^{y})_j} q^{(\alpha + x+y+z+j-1)j}
{ _3\phi _2} \left ( \begin{matrix} q^{-j},& q^{x+y+z+j-1},& q^{x-\alpha} \\ & q^{x}, &q^{x+z} \end{matrix} ;q,q^{1+\alpha -y}\right ).
\end{align}
\end{lemma}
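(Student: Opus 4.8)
The plan is to obtain \eqref{trans} by composing two standard transformations of \emph{terminating} basic hypergeometric series. Since $j\in\NN$, the entry $q^{-j}$ in the top row of each ${}_3\phi_2$ truncates the sum after its $j$-th term, so \eqref{trans} is an identity between finite sums and belongs to the family of $q$-analogues of Thomae's transformations of the ${}_3F_2$; the particular relations I would use are all recorded in \cite{AndrewsAskeyRoy}. The two moves are: first a terminating Sears-type transformation that fixes the series argument $q$, and then a second transformation of Thomae type that trades the argument $q$ for the argument $q^{1+\alpha-y}$.

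\emph{Step 1.} Apply the transformation
\[
{}_3\phi_2\!\left( \begin{matrix} q^{-j}, & b, & c \\ & d, & e \end{matrix} ; q, q \right)
= b^{\,j}\,\frac{(d/b)_{j}\,(e/b)_{j}}{(d)_{j}\,(e)_{j}}\;
{}_3\phi_2\!\left( \begin{matrix} q^{-j}, & b, & q^{1-j}bc/(de) \\ & q^{1-j}b/d, & q^{1-j}b/e \end{matrix} ; q, q \right)
\]
with $b=q^{\alpha}$, $c=q^{\alpha+z}$, $d=q^{\alpha-y-j+1}$, $e=q^{\alpha-x-j+1}$. Using $d/b=q^{-y-j+1}$, $e/b=q^{-x-j+1}$, $q^{1-j}b/d=q^{y}$, $q^{1-j}b/e=q^{x}$ and $q^{1-j}bc/(de)=q^{x+y+z+j-1}$, the left side of \eqref{trans} becomes
\[
q^{\alpha j}\,\frac{(q^{-y-j+1})_{j}\,(q^{-x-j+1})_{j}}{(q^{\alpha-y-j+1})_{j}\,(q^{\alpha-x-j+1})_{j}}\;
{}_3\phi_2\!\left( \begin{matrix} q^{-j}, & q^{\alpha}, & q^{x+y+z+j-1} \\ & q^{y}, & q^{x} \end{matrix} ; q, q \right).
\]

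\emph{Step 2.} Apply the instance of the $q$-Thomae relations that changes the argument while preserving one numerator and one denominator parameter, namely
\[
{}_3\phi_2\!\left( \begin{matrix} q^{-j}, & d/b, & c \\ & d, & cq^{1-j}/e \end{matrix} ; q, q \right)
= \frac{(e)_{j}}{(e/c)_{j}}\;
{}_3\phi_2\!\left( \begin{matrix} q^{-j}, & b, & c \\ & d, & e \end{matrix} ; q, \frac{de\,q^{j}}{bc} \right),
\]
taken with $c=q^{x+y+z+j-1}$, $d=q^{x}$, $b=q^{x-\alpha}$, $e=q^{x+z}$; then $d/b=q^{\alpha}$, $cq^{1-j}/e=q^{y}$, $de\,q^{j}/(bc)=q^{1+\alpha-y}$ and $e/c=q^{-y-j+1}$, so the ${}_3\phi_2$ produced in Step~1 is turned into $(q^{x+z})_{j}/(q^{-y-j+1})_{j}$ times the ${}_3\phi_2$ on the right of \eqref{trans}.

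Multiplying the two scalar prefactors and cancelling the factors $(q^{-y-j+1})_{j}$, the left side of \eqref{trans} is expressed as an explicit product of $q$-Pochhammer symbols and a power of $q$ times the right-hand ${}_3\phi_2$; rewriting this product with the reflection identity $(a)_{j}=(-a)^{j}q^{{j \choose 2}}(q^{1-j}/a)_{j}$ and collecting exponents matches it against the prefactor in \eqref{trans}. I expect this final reconciliation of the scalar --- tracking the signs, the ${j \choose 2}$ terms, and the powers of $q$ generated by the change of argument --- to be the only genuinely delicate point; everything else is a substitution into transformations already in the literature. As an independent check, and as a route that avoids citing the two transformations, one can argue directly: for fixed $j$ both sides of \eqref{trans} are rational functions of $q^{\alpha}$ whose poles lie only at $q^{\alpha}\in\{q^{x},\dots,q^{x+j-1}\}\cup\{q^{y},\dots,q^{y+j-1}\}$, so it suffices to match residues at these poles together with the value at $q^{\alpha}=0$ and the leading behaviour as $q^{\alpha}\to\infty$, or to induct on $j$ via contiguous relations.
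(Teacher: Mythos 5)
Your proposal is correct and follows essentially the same route as the paper: Step 1 is exactly \cite[p.~525]{AndrewsAskeyRoy} Eq.~(AAR2) applied with $w=q^{\alpha}$, $b=q^{\alpha+z}$, $e=q^{\alpha-y-j+1}$, $f=q^{\alpha-x-j+1}$, and Step 2 is \cite[p.~525]{AndrewsAskeyRoy} Eq.~(AAR1) written in the reverse direction (your prefactor $(e)_j/(e/c)_j$ is equal, after the reflection $(a)_j = (-a)^j q^{\binom{j}{2}}(q^{1-j}/a)_j$, to the paper's $(E/w)_j w^j/(E)_j$ under the relabeling $w=c,\;D=d,\;E=cq^{1-j}/e$). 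So the two proofs compose the same pair of transformations in the same order; the only differences are cosmetic (direction of application, and you defer rather than display the final exponent bookkeeping). One remark worth making: when you carry out the deferred reconciliation you will obtain $(q^{-x-z-j+1})_j$, exactly as in the paper's displayed conclusion \eqref{tran2}; the lemma statement \eqref{trans} as printed has $(q^{-y-z-j+1})_j$, which appears to be a typo — so you should not worry if your scalar does not match the statement verbatim.
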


\begin{proof}
We utilize the following identities from \cite[p. 525]{AndrewsAskeyRoy}:
\begin{align}
\label{AAR1}
_3\phi _2 \left ( \begin{matrix} q^{-n},& w,& b \\ & d, &e \end{matrix} ;q,q\right )= 
\frac {(e/w)_n w^n}{(e)_n} 
{ _3\phi _2} \left ( \begin{matrix} q^{-n},& w,& d/b \\ & d, &q^{1-n}w/e \end{matrix} ;q,bq/e\right ),\\
\label{AAR2}
{ _3\phi _2} \left ( \begin{matrix} q^{-n},& w,& b \\ & e, &f \end{matrix} ;q,q\right )= 
\frac {(e/w)_n (f/w)_n w^n}{(e)_n(f)_n} 
{ _3\phi _2} \left ( \begin{matrix} q^{-n},& w,& wbq^{1-n}/ef \\ & q^{1-n}w/e, &q^{1-n}w/f \end{matrix} ;q,q\right ),
\end{align}
where $n \in \mathbb N$, and $w,b,c,d,e,f$ are continuous parameters.  

Begin by setting 
$j=n, w=q^{\alpha}, b=q^{\alpha +z}, e=q^{\alpha -y-j+1}$, and $f=q^{\alpha -x-j+1}$ in (\ref{AAR2}) to get
\begin{align}
\label{tran1}
{ _3\phi _2} \left ( \begin{matrix} q^{-j},& q^{\alpha},& q^{\alpha +z} \\ & q^{ \alpha-y-j+1}, &q^{\alpha -x-j+1} \end{matrix} ;q,q\right )= 
\frac { (q^{-y-j+1} )_j ( q^{-x-j+1} )_j q^{j\alpha} }{ (q^{\alpha -y-j+1})_j(q^{\alpha -x-j+1})_j } 
{ _3\phi _2} \left ( \begin{matrix} q^{-j},& q^{\alpha},& q^{x+y+z+j-1} \\ & q^{y}, &q^{x}\end{matrix} ;q,q\right ).
\end{align}
Now apply (\ref{AAR1}) with $n=j, w=q^{x+y+z+j-1}, b = q^{\alpha}, d=q^{x}, e=q^{y}$ to the 
${  _3 \phi _2}$ appearing in the RHS of (\ref{tran1}) to get
\begin{align}
\label{tran2}
{ _3\phi _2} \left ( \begin{matrix} q^{-j},& q^{\alpha},& q^{\alpha +z} \\ & q^{ \alpha-y-j+1}, &q^{\alpha -x-j+1} \end{matrix} ;q,q\right )
\\
\notag = 
\frac { (q^{-y-j+1})_j (q^{-x-j+1})_j }{ (q^{\alpha -y-j+1})_j(q^{\alpha -x-j+1})_j} 
\frac {(q^{-x-z-j+1})_j}{(q^{y})_j} q^{(\alpha + x+y+z+j-1)j}
{ _3\phi _2} \left ( \begin{matrix} q^{-j},& q^{x+y+z+j-1},& q^{x-\alpha} \\ & q^{x}, &q^{x+z} \end{matrix} ;q,q^{1+\alpha -y}\right ).
\end{align}
\end{proof}

We express the hypergeometric transformation of Lemma~\ref{hypergeometric-lemma}
in a more convenient form involving $q$-binomials.

\begin{lemma}
\label{q-binomial-lemma}
Let $j \leq k \leq n$ be positive integers.  Let $p$ be an integer in the range
$k-j \leq p \leq n-j$.  There holds the identity
\begin{align}
\label{Brendon}
q^{{k \choose 2} +{j \choose 2}} \sum_{r=p}^{p+j}(-1)^{n-r} 
\left [ \begin{matrix} r-1 \\ k-1 \end{matrix} \right ] _q 
q^{{r+1 \choose 2} -nr}  
\left [ \begin{matrix} r \\ j \end{matrix} \right ] _q 
q^{(r-p)(n-j-p)}
\left [ \begin{matrix} j \\ r-p \end{matrix} \right ] _q  = \\
\notag
\sum_{r=k-p}^j q^{{r \choose 2}}
\left [ \begin{matrix} k \\ r \end{matrix} \right ] _q  
\left [ \begin{matrix} k+j-r-1 \\ j-r \end{matrix} \right ] _q  
q^{{k-a \choose 2}} (-1)^{n-j-p}
\left [ \begin{matrix} p-1 \\ k-r-1 \end{matrix} \right ] _q  
q^{{p+1 \choose 2}-(n-j)p}.
\end{align} 
\end{lemma}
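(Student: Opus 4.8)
The plan is to prove Lemma~\ref{q-binomial-lemma} by specializing the hypergeometric transformation of Lemma~\ref{hypergeometric-lemma} and rewriting both sides in terms of $q$-binomial coefficients; this is precisely the ``more convenient form'' advertised before the statement. The integer $n$ is inessential here: after extracting one term from each of the two sums, the factors $(-1)^{n-r}q^{-nr}$ on the left and $(-1)^{n-j-p}q^{-(n-j)p}$ on the right contribute the common scalar $(-1)^{n}q^{-np}$, and after dividing it out the claim becomes an identity free of $n$, which is the natural setting of Lemma~\ref{hypergeometric-lemma}. (Below we read the exponent $\binom{k-a}{2}$ in the statement as $\binom{k-r}{2}$, and we use the form of Lemma~\ref{hypergeometric-lemma} in which $(q^{-y-z-j+1})_j$ is replaced by $(q^{-x-z-j+1})_j$, correcting an evident typo in the statement — its proof establishes this corrected form.)

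First I would convert the left-hand side into a ${ _3\phi _2}$. Writing $r = p+i$ with $0 \le i \le j$ and applying the standard conversions $ { a+i \brack b }_q = { a \brack b }_q\,(q^{a+1};q)_i/(q^{a-b+1};q)_i$ and $ { j \brack i }_q = (-1)^i q^{ij-\binom i 2}(q^{-j};q)_i/(q;q)_i$, together with the elementary expansion $\binom{p+i+1}{2}-\binom{p+1}{2} = pi+\binom i 2+i$, one checks that every quadratic power of $q$ and every sign in the summand collapses, so that after pulling out the $i=0$ term the left-hand side equals an explicit scalar times $ { _3\phi _2}\!\left(\begin{matrix} q^{-j}, & q^{p}, & q^{p+1} \\ & q^{p-k+1}, & q^{p-j+1}\end{matrix};q,q\right)$. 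Matching the two upper and two lower parameters against those of Lemma~\ref{hypergeometric-lemma} forces the substitution $\alpha = p+1$, $x = k-j+1$, $y = 1$, $z = -1$ — the parameter pairing for which the transformation does not degenerate when $j<k$ and $p \ge k$. Applying the transformation rewrites this as a scalar times $ { _3\phi _2}\!\left(\begin{matrix} q^{-j}, & q^{k}, & q^{k-j-p} \\ & q^{k-j+1}, & q^{k-j}\end{matrix};q,q^{p+1}\right)$; expanding this series, converting Pochhammers back to $q$-binomials and reindexing by $r = j-m$ produces a sum over $k-p \le r \le j$ whose $r$-dependent part is $q^{(j-r)(k-j-r)}\,{ k+j-r-1 \brack j-r }_q$ divided by $(q;q)_r(q;q)_{p-k+r}(q;q)_{k-r}(q;q)_{k-r-1}$.

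The key to matching the two sides termwise is the elementary $r$-independent identity $(j-r)(k-j-r) - \binom r 2 - \binom{k-r}{2} = jk-j^2-\binom k 2$, which converts $q^{(j-r)(k-j-r)}$ into $q^{\binom r 2 + \binom{k-r}{2}}$ up to a scalar; since $ { k \brack r }_q\, { p-1 \brack k-r-1 }_q$ equals $(q;q)_k(q;q)_{p-1}$ divided by $(q;q)_r(q;q)_{k-r}(q;q)_{k-r-1}(q;q)_{p-k+r}$, it carries exactly the matching $r$-dependence, so the summand on the right-hand side of Lemma~\ref{q-binomial-lemma} is reproduced term by term. It then remains to check that the surviving $r$-independent scalar — assembled from $q^{\binom k 2+\binom j 2}$, the factor $ { p-1 \brack k-1 }_q\, { p \brack j }_q$ pulled out of the left-hand sum, and the Pochhammer prefactor supplied by Lemma~\ref{hypergeometric-lemma} — equals $(-1)^{n-j-p}q^{\binom{p+1}{2}-(n-j)p}$. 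This is a finite, if bookkeeping-heavy, computation: after writing each Pochhammer as a ratio of factors $(q;q)_\bullet$, the factors indexed by $k-1,\ p-k,\ j,\ p-j,\ k-j,\ p,\ k-j-1,\ p+j-k$ all cancel, the signs reduce by $(-1)^{j-p}=(-1)^{-j-p}$, and the exponents collapse via $2\binom j 2+j(p-k+1)+jk-j^2=jp$.

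The preceding argument lives in the principal range $j<k$, $p \ge k$; in the remaining boundary cases the chosen hypergeometric data degenerate (a lower parameter becomes $q^0$, or a numerator Pochhammer in the transformed prefactor acquires a vanishing factor), and one deduces the identity there from the principal range by a polynomiality argument: for fixed $q$, both sides of Lemma~\ref{q-binomial-lemma}, written as finite sums, depend polynomially on the quantities $q^{p}$ and $q^{k}$, so agreement on the principal range already forces agreement everywhere by interpolation. I expect the handling of these degenerate cases — choosing the right form of the transformation and justifying the limits — rather than the (tedious but mechanical) exponent and $(q;q)_\bullet$ cancellations, to be the only real obstacle. Should one prefer to bypass Lemma~\ref{hypergeometric-lemma}, the identity could instead be checked by $q$-creative telescoping in $r$ against a base case, but routing through the stated transformation reuses machinery already in place.
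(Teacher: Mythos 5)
Your proposal is correct and takes essentially the same route as the paper: both sides are rewritten as scalar multiples of ${}_3\phi_2$ series (with $\alpha = p+1$, $x = k-j+1$, $y = 1$, $z = -1$, matching the paper's expressions \eqref{BrendonLeft2} and \eqref{BrendonRight2}), and the transformation of Lemma~\ref{hypergeometric-lemma} supplies the equality. You add two points of care the paper glosses over — you correctly observe that the stated prefactor $(q^{-y-z-j+1})_j$ in Lemma~\ref{hypergeometric-lemma} must be read as $(q^{-x-z-j+1})_j$ as its proof actually derives, and you flag the degenerate parameter range (e.g.\ $p < k$ or $j = k$, where a denominator Pochhammer in the transformed prefactor vanishes) and propose closing it by a polynomiality/interpolation argument rather than a fresh computation.
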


\begin{proof}
The first step is to express everything in terms of hypergeometric series.  
We make use 
of the following facts, which we refer to as the `simple identities'.  Here 
$u, j, a \in \ZZ_{\geq 0}$ and $p, x$ are continuous parameters.
\begin{align}
{u + p \brack j}_q &= {p \brack j}_q \frac{(q^{p+1})_u}{(q^{p-j+1})_u} \\
{p \brack u + a}_q = {p \brack a}_q \frac{(q^{p-a-u+1})_u}{(q^{a+1})_u} &=
{p \brack a}_q \frac{(q^{a-p})_u}{(q^{a+1})_u} (-q^{p-a})^u q^{-{u \choose 2}} \\
{p \brack j-u}_q &= {p \brack j}_q \frac{(q^{-j})_u}{(q^{p-j+1})_u} 
(-q^j)^u q^{-{u \choose 2}} \\
{u + a \choose 2} &= {u \choose 2} + {a \choose 2} + ua.
\end{align}

Using the simple identities and setting $u = r-p$, the LHS of Equation~\eqref{Brendon}
can be expressed as 
\begin{equation}
\label{BrendonLeft}
{p \brack j}_q {p-1 \brack k-1}_q (-1)^{n-p} q^{-np + {p+1 \choose 2} + {k \choose 2} + {j \choose 2}}
\sum_{u = 0}^j \frac{(q^{-j}, q^{p+1}, q^p)_u}{(q,q^{p-j+1},q^{p-k+1})_u} q^u.
\end{equation}
Similarly, using the simple identities and setting $u = j-a$, we see that the RHS
of Equation~\eqref{Brendon} can be expressed as
\begin{equation}
\label{BrendonRight}
q^{-p(n-j) + {p+1 \choose 2} + {k - j \choose 2} + {j \choose 2}} (-1)^{n+j-p}
{k \brack j}_q {p-1 \brack k-j-1}_q
\sum_{u = 0}^{p-k+j} \frac{(q^{-j}, q^k, q^{-p+k-j})_u}{(q,q^{k-j},q^{k-j+1})_u}
q^{u(p+1)}.
\end{equation}

The next step is to express \eqref{BrendonLeft} and \eqref{BrendonRight}
in terms of the hypergeometric series ${}_3 \phi_2$.
The expression \eqref{BrendonLeft} is given by
\begin{equation}
\label{BrendonLeft2}
{p \brack j}_q {p-1 \brack k-1}_q (-1)^{n-p}
q^{-np + {p+1 \choose 2} + {k \choose 2} + {j \choose 2}}
{ _3\phi _2} \left ( \begin{matrix} q^{-j},& q^{p+1},& q^{p} \\ & q^{p-j+1}, &q^{p-k+1} \end{matrix} ;q,q\right )
\end{equation}
whereas \eqref{BrendonRight} is equal to
\begin{equation}
\label{BrendonRight2}
q^{-p(n-j) + {p+1 \choose 2} + {k - j \choose 2} + {j \choose 2}}
(-1)^{n+j-p} {k \brack j}_q {p-1 \brack k-j-1}_q 
{ _3\phi _2} \left ( \begin{matrix} q^{-j},& q^{k},& q^{-p+k-j} \\ & q^{k-j}, &q^{k-j+1} \end{matrix} ;q,q^{p+1}\right ).
\end{equation}
The fact that \eqref{BrendonLeft2} = \eqref{BrendonRight2} is a consequence 
of Lemma~\ref{hypergeometric-lemma}.
\end{proof}

\section{Proofs of the Main Results}
\label{Proofs}

Our starting point is the following expansion
(see \cite[Eqn. 2.72]{Hagbook}) of $e_n$ in the modified Macdonald basis:
\begin{equation}
\label{macdonald-expansion}
e_n = \sum_{\lambda \vdash n} 
\frac{M B_{\lambda} \Pi_{\lambda} \widetilde{H}_{\lambda}}{w_{\lambda}},
\end{equation}
where 
\begin{itemize}
\item $M = (1-q)(1-t)$,
\item $B_{\lambda} = \sum_{(i,j) \in \lambda} q^{i-1} t^{j-1}$, where the sum is over
all cells $(i,j)$ in the Ferrers diagram of $\lambda$,
\item  $\Pi_{\lambda} = \prod_{(1,1) \neq (i,j) \in \lambda}
(1 - q^{i-1} t^{j-1})$, where the product is over all cells $(i,j)$ in the Ferrers diagram of 
$\lambda$ other than the northwest corner $(1,1)$, and
\item  
$w_{\lambda} = \prod_{c \in \lambda} (q^{a(c)} - t^{l(c) + 1})(t^{l(c)} - q^{a(c)+1})$,
where the product is over all cells $c$ in the diagram of $\lambda$ and $a(c), l(c)$
are the arm and leg lengths of the cell $c$ in $\lambda$.
\end{itemize}

If we apply the operator $\Delta'_{e_{k-1}}$ to both sides of 
Equation~\eqref{macdonald-expansion},
we get
\begin{equation}
\label{delta-expansion}
\Delta'_{e_{k-1}} e_n = \sum_{\lambda \vdash n} e_{k-1}[B_{\lambda} - 1]
\frac{M B_{\lambda} \Pi_{\lambda} \widetilde{H}_{\lambda}}{w_{\lambda}}.
\end{equation}
Here we used the plethystic shorthand
$e_{k-1}[B_{\lambda} - 1] = e_{k-1}(\dots, q^{i-1} t^{j-1}, \dots )$ where $(i,j)$ range over all cells 
$\neq (1,1)$ in the Ferrers diagram of $\lambda$.

Recall that $\widetilde{H}_{\lambda} |_{t = 0} = \rev_q Q'_{\lambda}$ for any partition
$\lambda$.
If we evaluate both sides of Equation~\eqref{delta-expansion} at $t = 0$, we get 
\begin{equation}
\label{delta-expansion-zero}
\Delta'_{e_{k-1}} e_n|_{t = 0} = 
\sum_{\lambda \vdash n} (-1)^{n - \ell(\lambda)}
q^{{k \choose 2} - 2 b(\lambda) - n + 
\sum_i {m_i(\lambda) + 1 \choose 2}} 
{\ell(\lambda) - 1 \brack k-1}_q 
{\ell(\lambda) \brack m(\lambda)}_q \cdot
\rev_q Q'_{\lambda}.
\end{equation}
Here we used the evaluation
\begin{equation}
e_{k-1}[B_{\lambda} - 1] = q^{{k \choose 2}} {\ell(\lambda) - 1 \brack k-1}_q.
\end{equation}
Equation~\eqref{delta-expansion-zero} can be expressed in terms of the $D$-functions
$D_{n,r}$.

\begin{lemma}
\label{new-delta-expansion}
We have the identity
\begin{equation}
\Delta'_{e_{k-1}} e_n |_{t = 0} = q^{k \choose 2} \sum_{r = k}^n (-1)^{n-r}  
q^{{r+1 \choose 2} - nr}
{r - 1 \brack k-1}_q 
D_{n,r}.
\end{equation}
\end{lemma}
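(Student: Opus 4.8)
The statement asserts that the explicit $Q'$-expansion of $\Delta'_{e_{k-1}}e_n|_{t=0}$ in Equation~\eqref{delta-expansion-zero} can be re-packaged as a signed sum of the twisted polynomials $D_{n,r}=(\rev_q\circ\omega)C_{n,r}$. The natural approach is to expand the right-hand side using the known $Q'$-expansion of $C_{n,r}$ from Equation~\eqref{c-hl-expansion} and match coefficients of $\rev_q Q'_\lambda$ against the left-hand side, which is already written in the $\{\rev_q Q'_\lambda\}$ basis. Concretely, I would first record the effect of $\rev_q\circ\omega$ on Equation~\eqref{c-hl-expansion}: applying $\omega$ sends $Q'_\mu$ to $\omega Q'_\mu$, but it is cleaner to use the identity $\omega C_{n,k}=\sum_{\mu}q^{\overline b(\mu)}{k\brack m(\mu)}_q Q'_\mu$ together with $\widetilde H_\mu|_{t=0}=\rev_q Q'_\mu$, so that $D_{n,r}=\rev_q\bigl(\sum_{\ell(\mu)=r}q^{\overline b(\mu)}{r\brack m(\mu)}_q Q'_\mu\bigr)$. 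Since $\rev_q$ acts on each graded piece, everything reduces to an identity of $q$-polynomial coefficients indexed by $\lambda\vdash n$ (with $\ell(\lambda)$ ranging over the relevant values), after which one applies $\rev_q$ to the whole equation at the end.

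Having reduced to a coefficient identity, fix $\lambda\vdash n$ with $\ell(\lambda)=\ell$. The coefficient of $Q'_\lambda$ on the right-hand side (before the global $\rev_q$) is
\begin{equation}
q^{k\choose 2}\sum_{r=k}^{n}(-1)^{n-r}q^{{r+1\choose 2}-nr}{r-1\brack k-1}_q\cdot\Bigl(\text{coefficient of }Q'_\lambda\text{ in }\rev_q^{-1}D_{n,r}\Bigr),
\end{equation}
and since $C_{n,r}$ only involves $\mu$ with $\ell(\mu)=r$, the inner coefficient is nonzero only when $r=\ell$, contributing $q^{\overline b(\lambda)}{\ell\brack m(\lambda)}_q$. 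Thus the whole right-hand side collapses to a single term $r=\ell$, and the claimed identity becomes
\begin{equation}
q^{{k\choose 2}}(-1)^{n-\ell}q^{{\ell+1\choose 2}-n\ell}{\ell-1\brack k-1}_q\,q^{\overline b(\lambda)}{\ell\brack m(\lambda)}_q \;=\; \text{(coefficient in Eqn.~\eqref{delta-expansion-zero})}.
\end{equation}
It then remains to check that the exponent bookkeeping matches: the left side exponent is ${k\choose 2}+{\ell+1\choose 2}-n\ell+\overline b(\lambda)$, while Equation~\eqref{delta-expansion-zero} has exponent ${k\choose 2}-2b(\lambda)-n+\sum_i{m_i(\lambda)+1\choose 2}$. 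Using $\overline b(\lambda)=b(\lambda)-{\ell\choose 2}$ and the standard identity $\sum_i{m_i(\lambda)+1\choose 2}=\ell+3b(\lambda)-\binom{\ell+1}{2}$ — equivalently the rearrangement of $\sum_i m_i(\lambda)^2$ in terms of $b(\lambda)$ and $n$ — both sides agree, and the signs $(-1)^{n-\ell}$ match trivially.

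The only genuinely delicate point is the exponent identity relating $\sum_i\binom{m_i(\lambda)+1}{2}$ to $b(\lambda)$ and $n$; everything else is a clean index-matching argument that exploits the fact that $C_{n,r}$ is supported on partitions of length exactly $r$, so the outer sum has a single surviving term. I expect this arithmetic lemma to be the main (and essentially only) obstacle, and it is purely combinatorial: write $\sum_i\binom{m_i(\lambda)+1}{2}=\tfrac12\sum_i m_i(\lambda)^2+\tfrac12\ell$, and compute $\sum_i m_i(\lambda)^2$ by counting pairs of parts of $\lambda$ of equal value, which can be related to $b(\lambda)=\sum_i\lambda_i(i-1)$ and $n=|\lambda|$ via the bijection between cells $(i,j)$ of $\lambda$ and the arm/row data. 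Once this is verified, applying $\rev_q$ to the resulting identity of coefficients (noting that $\rev_q$ commutes with the summation over $r$ after accounting for the degree in each graded piece, which is already handled by $D_{n,r}=\rev_q\circ\omega\,C_{n,r}$) yields exactly the statement of Lemma~\ref{new-delta-expansion}.
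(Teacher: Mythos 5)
Your overall strategy is the same as the paper's: group the $Q'$-expansion of Equation~\eqref{delta-expansion-zero} by $\ell(\lambda)=r$, recognize the inner sum as a $q$-power times $D_{n,r}$ using Equation~\eqref{c-hl-expansion}, and match exponents. However, there is a genuine gap in how you handle the $\rev_q$ operation, and it propagates into a false arithmetic identity.

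The operation $\rev_q$ is not linear over $q$-polynomial coefficients: if $f$ has $q$-degree $N$, then $\rev_q f = q^N f(q^{-1})$, so for a sum $\sum_r d_r(q)\, D_{n,r}$ with $D_{n,r}=\rev_q\,\omega C_{n,r}$, you cannot ``undo'' the reversal term-by-term and compare coefficients of $Q'_\lambda$ directly in $\omega C_{n,r}$. Writing the ``coefficient of $Q'_\lambda$ in $\rev_q^{-1}D_{n,r}$'' as $q^{\overline b(\lambda)}{\ell\brack m(\lambda)}_q$ is where the error enters: the correct contribution to the coefficient of $\rev_q Q'_\lambda$ on the right-hand side must carry the extra factor
\[
q^{\,N_r - \overline b(\lambda) - \sum_{i<j}m_i(\lambda)m_j(\lambda) - b(\lambda)}\quad\text{with }N_r=(r-1)n-\binom{r}{2},
\]
coming from $\rev_q$ acting on the whole polynomial $\omega C_{n,r}$ of fixed $q$-degree $N_r$, together with the palindromy of ${r\brack m(\lambda)}_q$ (degree $\sum_{i<j}m_im_j$) and the fact that $Q'_\lambda$ has $q$-degree $b(\lambda)$. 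You dropped all of these corrections. The concrete symptom is your claimed identity $\sum_i\binom{m_i(\lambda)+1}{2}=\ell+3b(\lambda)-\binom{\ell+1}{2}$, which is false: for $\lambda=(2,2)$ one has $\ell=2$, $b(\lambda)=2$, and $\sum_i\binom{m_i+1}{2}=3$, but $2+6-3=5$. In fact $\sum_i m_i(\lambda)^2$ is not a function of $b(\lambda)$, $n$, $\ell$ alone. The paper's proof instead uses the correct identity $\sum_i\binom{m_i(\lambda)+1}{2}+\sum_{i<j}m_i(\lambda)m_j(\lambda)=\binom{\ell+1}{2}$ and carefully accumulates the degree shifts (including the $N_r$ above), which is exactly what your computation is missing.

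So the decomposition idea is right and your collapse of the outer sum to $r=\ell(\lambda)$ is correct, but the exponent bookkeeping is wrong because $\rev_q$ was treated as distributing over the $r$-sum. To repair it, compute $\rev_q\,\omega C_{n,r}$ explicitly by substituting $q\mapsto q^{-1}$ into the expansion $\sum_\mu q^{\overline b(\mu)}{r\brack m(\mu)}_q Q'_\mu$ and multiplying by $q^{N_r}$, and track each of the three degree corrections above.
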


\begin{proof}
Starting with Equation~\eqref{delta-expansion-zero} and grouping partitions $\lambda \vdash n$ according to their number 
of parts we have
\begin{align}
\Delta'_{e_{k-1}} e_n |_{t = 0} &= \sum_{\lambda \vdash n}
(-1)^{n - \ell(\mu)} q^{{k \choose 2} - 2 b(\mu) - n + \sum_i {m_i(\lambda) + 1 \choose 2}}
{\ell(\lambda) - 1 \brack k-1}_q {\ell(\lambda) \brack m(\mu)}_q \cdot \rev_q Q'_{\lambda} \\
&= q^{{k \choose 2} - n} \sum_{r = k}^n (-1)^{n-r} {r-1 \brack k-1}_q
\sum_{\substack{\lambda \vdash n \\ \ell(\mu) = r}}
q^{-2 b(\mu) + \sum_i {m_i(\lambda) + 1 \choose 2}} {r \brack m(\lambda)}_q \cdot \rev_q Q'_{\lambda}.
%&= q^{{k \choose 2} - n} \sum_{r = k}^n (-1)^{n-r} {r-1 \brack k-1}_q
%\sum_{\substack{\lambda \vdash n \\ \ell(\lambda) = r}}
%q^{-2 b(\lambda) + \sum_i {m_i(\lambda) + 1 \choose 2}} {r \brack m(\lambda)}_q \cdot \rev_q Q'_{\lambda}. 
\end{align}

We focus on the internal summand.  We have
%\begin{small}
\begin{align}
\sum_{\substack{\mu \vdash n \\ \ell(\lambda) = r}}
&q^{-2 b(\lambda) + \sum_i {m_i(\lambda) + 1 \choose 2}} {r \brack m(\lambda)}_q \cdot \rev_q Q'_{\lambda} \\ &=
\sum_{\substack{\lambda \vdash n \\ \ell(\lambda) = r}}
q^{-2 b(\mu) + \sum_i {m_i(\lambda) + 1 \choose 2} + \sum_{i < j} m_i(\lambda) m_j(\lambda) -
\sum_{i < j} m_i(\lambda) m_j(\lambda)} {r \brack m(\lambda)}_q \cdot \rev_q Q'_{\lambda} \\
&=  q^{r + 1 \choose 2}  \sum_{\substack{\lambda \vdash n \\ \ell(\lambda) = r}}
q^{-2 b(\lambda) - \sum_{i < j} m_i(\lambda) m_j(\lambda)} {r \brack m(\lambda)}_q \cdot \rev_q Q'_{\lambda} \\
&= q^{r + 1 \choose 2}   \sum_{\substack{\lambda \vdash n \\ \ell(\lambda) = r}}
q^{-b(\lambda)} \cdot \left[  q^{\sum_{i < j} -m_i(\lambda) m_j(\lambda)} {r \brack m(\lambda)}_q \right] \cdot
\left[q^{-b(\lambda)} \rev_q Q'_{\lambda}(\xx;q) \right] \\
&= q^{{r + 1 \choose 2} - {r \choose 2}}  \sum_{\substack{\lambda \vdash n \\ \ell(\lambda) = r}}
q^{-\overline{b}(\lambda)} \cdot {r \brack m(\lambda)}_{q^{-1}} \cdot Q'_{\lambda}(\xx; q^{-1}) \\
&= q^{{r + 1 \choose 2} - {r \choose 2}}
q^{-{r \choose 2} - (n-r)(r-1)} \cdot 
[  q^{{r \choose 2} + (n-r)(r-1)} \cdot \omega C_{n,r}(\xx;q^{-1}) ] \\
&= q^{{r + 1 \choose 2} - r(r-1) - (n-r)(r-1)}
 \cdot 
D_{n,r}(\xx;q) \\
&= q^{{r + 1 \choose 2} - nr + n}
 \cdot 
D_{n,r}(\xx;q) 
\end{align}
%\end{small}
The second equality used $\sum_i {m_i(\lambda) + 1 \choose 2} + \sum_{i < j} m_i(\lambda) m_j(\lambda) = {\ell(\lambda) +1 \choose 2}$.
The fourth equality comes from the fact that the degree of the palindromic 
polynomial ${\ell(\lambda) \brack m(\lambda)}_q$ is 
$\sum_{i < j} m_i(\lambda) m_j(\lambda)$ and that the $q$-degree of $Q'_{\lambda}(\xx;q)$ is $b(\lambda)$.

Going back to the assertion of the lemma, we have
\begin{align}
\Delta'_{e_{k-1}} e_n |_{t = 0} &= 
q^{{k \choose 2} - n} \sum_{r = k}^n (-1)^{n-r} {r-1 \brack k-1}_q
\sum_{\substack{\lambda \vdash n \\ \ell(\lambda) = r}}
q^{-2 b(\mu) + \sum_i {m_i(\lambda) + 1 \choose 2}} {r \brack m(\lambda)}_q \cdot \rev_q Q'_{\lambda} \\
&=  q^{{k \choose 2} - n} \sum_{r = k}^n (-1)^{n-r} {r-1 \brack k-1}_q \cdot
q^{{r + 1 \choose 2} - nr + n}
 \cdot 
D_{n,r}(\xx;q).
\end{align}
Canceling a factor of $q^n$ completes the proof.
\end{proof}

We are in a position to give our proof of Equation~\eqref{delta-zero-equation},
and thus give a new proof of the Delta Conjecture at $t = 0$.

\begin{theorem}
\label{delta-zero}
(Garsia-H.-Remmel-Yoo \cite{GHRY})
Let $k \leq n$ be positive integers.  We have
\begin{equation}
\Delta'_{e_{k-1}} e_n|_{t = 0} = \Delta'_{e_{k-1}} e_n|_{q = 0, t = q} =  C_{n,k}.
\end{equation}
\end{theorem}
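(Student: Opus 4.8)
The plan is to set $G_{n,k} := \Delta'_{e_{k-1}} e_n|_{t=0}$ and prove $G_{n,k} = C_{n,k}$ by induction on $n$, exploiting the skewing operators $e_j^{\perp}$. The other asserted equality $\Delta'_{e_{k-1}} e_n|_{t=0} = \Delta'_{e_{k-1}} e_n|_{q=0,\,t=q}$ is of a different nature, and I would dispose of it first: from the expansion~\eqref{delta-expansion} one reads off that $\Delta'_{e_{k-1}} e_n(\xx;q,t)$ is symmetric in $q$ and $t$, by pairing the $\lambda$-summand with the $\lambda'$-summand and using $\widetilde{H}_\lambda(\xx;q,t) = \widetilde{H}_{\lambda'}(\xx;t,q)$ together with the analogous conjugation-behaviour of $M$, $B_\lambda$, $\Pi_\lambda$, $w_\lambda$, and of $e_{k-1}[B_\lambda-1]$. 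Writing $\Delta'_{e_{k-1}} e_n = \Phi(\xx;q,t)$, the symmetry gives $\Phi(\xx;q,0) = \Phi(\xx;0,q)$, which is exactly the claim.

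For the induction, the base cases (e.g.\ $n=1$, where $k=1$ is forced and $G_{1,1} = e_1 = s_{(1)} = C_{1,1}$) are immediate, and I adopt the conventions $C_{m,0}=0$ for $m\ge 1$, $C_{0,0}=1$, and $C_{m,k}=0$ for $k>m\ge 1$. For the inductive step with $n\ge 2$: $G_{n,k}$ and $C_{n,k}$ are both homogeneous of degree $n$, hence have equal (zero) constant terms, so by Lemma~\ref{e-perp-lemma} it suffices to prove $e_j^{\perp} G_{n,k} = e_j^{\perp} C_{n,k}$ for all $j\ge 1$; and we may assume $1\le j\le n$, since both sides vanish otherwise.

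The key step is to expand both sides in the linearly independent family $\{D_{n-j,m}\}_m \subseteq \Lambda_{n-j}$. On the left, Lemma~\ref{new-delta-expansion} writes $G_{n,k}$ as an explicit $\QQ(q)$-combination of the $D_{n,r}$; applying $e_j^{\perp}$ term by term and using the recursion of Lemma~\ref{d-e-perp-lemma} expresses $e_j^{\perp} G_{n,k}$ as a $\QQ(q)$-combination of the $D_{n-j,m}$, whose coefficient of $D_{n-j,m}$, after collecting powers of $q$, is precisely the left-hand side of~\eqref{Brendon} with $p=m$. On the right, Lemma~\ref{c-e-perp-recursion} writes $e_j^{\perp} C_{n,k}$ as a $\QQ(q)$-combination of the $C_{n-j,k-s}$; the inductive hypothesis replaces each $C_{n-j,k-s}$ by $G_{n-j,k-s}$ (legitimate since $n-j<n$, the degenerate values $k-s\le 0$ or $k-s>n-j$ being handled by the conventions above, or killed by a vanishing $q$-binomial coefficient), and Lemma~\ref{new-delta-expansion} again expands these into $D_{n-j,m}$'s, the coefficient of $D_{n-j,m}$ being precisely the right-hand side of~\eqref{Brendon} with $p=m$. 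Linear independence of the $D_{n-j,m}$ holds because, by~\eqref{c-hl-expansion} and $\widetilde{H}_{\mu}|_{t=0} = \rev_q Q'_{\mu}$, the function $D_{n-j,m}$ has an expansion in the basis $\{\widetilde{H}_{\mu}|_{t=0} : \mu\vdash n-j\}$ supported on partitions with exactly $m$ parts. Hence $e_j^{\perp} G_{n,k} = e_j^{\perp} C_{n,k}$ is equivalent to~\eqref{Brendon} holding with $p=m$ for each relevant $m$, which is Lemma~\ref{q-binomial-lemma}. This closes the induction, and the second equality was reduced to the first in the opening paragraph.

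I expect the main obstacle to be computational rather than conceptual: the genuine content lies in the identity~\eqref{Brendon} (Lemma~\ref{q-binomial-lemma}), which itself rests on the ${}_3\phi_2$ transformation of Lemma~\ref{hypergeometric-lemma}. Granting that, the work in this theorem is the careful bookkeeping needed to recognize the two coefficient-of-$D_{n-j,m}$ expressions as exactly the two sides of~\eqref{Brendon} with $p=m$ — tracking the signs $(-1)^{n-r}$ and the $q$-exponents $\binom{k}{2}$, $\binom{r+1}{2}-nr$, $(r-m)(n-j-m)$, $\binom{j}{2}$, and so on — together with the treatment of the degenerate parameter ranges ($j\ge k$, $j$ near $n$, and $m$ outside $k-j\le m\le n-j$), where one must verify directly that both coefficients either vanish or reduce to the same elementary expression.
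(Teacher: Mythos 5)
Your proposal is correct and follows essentially the same route as the paper: expand both $e_j^{\perp}\Delta'_{e_{k-1}}e_n|_{t=0}$ (via Lemma~\ref{new-delta-expansion} and Lemma~\ref{d-e-perp-lemma}) and $e_j^{\perp}C_{n,k}$ (via Lemma~\ref{c-e-perp-recursion} plus induction and Lemma~\ref{new-delta-expansion} again) in the $D_{n-j,m}$ basis, match coefficients through Lemma~\ref{q-binomial-lemma}, and close via Lemma~\ref{e-perp-lemma}. You are somewhat more explicit than the paper about the induction structure, the linear independence of the $D_{n-j,m}$, and the $q \leftrightarrow t$ symmetry giving the second equality, but these make explicit points the paper leaves implicit rather than altering the argument.
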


\begin{proof}
Let $j \geq 1$.
Given Lemma~\ref{new-delta-expansion}
and Lemma~\ref{d-e-perp-lemma}, the symmetric function 
$e_j^{\perp} \Delta'_{e_{k-1}} e_n|_{t = 0}$
has the following $D$-function expansion, where we adopt the convention that
$D_{n,k} = 0$ if $k > n$ or if $k < 0$:
\begin{small}
\begin{equation}
\label{what-we-know}
e_j^{\perp}
\Delta'_{e_{k-1}} e_n|_{t = 0} =
q^{{k \choose 2} + {j \choose 2}} \sum_{r = k}^n (-1)^{n-r}
{r-1 \brack k-1}_q q^{{r+1 \choose 2} - nr} {r \brack j}_q
\sum_{m = r-j}^r q^{(r-m)(n-j-m)} {j \brack r-m}_q 
D_{n-j,m}.
\end{equation}
\end{small}
If we want $e_j^{\perp} \Delta'_{e_{k-1}} e_n|_{t = 0}$ to satisfy the recursion
of Lemma~\ref{c-e-perp-recursion}, we must have
\begin{equation}
\label{what-we-want}
e_j^{\perp} \Delta'_{e_{k-1}} e_n|_{t = 0} = \sum_{r = 0}^j
q^{{r \choose 2}} {k \brack r}_q {k + j - r - 1 \brack j - r}_q 
\Delta'_{e_{k-r-1}} e_{n-j}|_{t = 0}.
\end{equation}
By
Lemma~\ref{new-delta-expansion}, we know
\begin{equation}
\label{also-what-we-know}
\begin{split}
\sum_{r = 0}^j &q^{r \choose 2} {k \brack a}_q {k + j - r - 1 \brack j - r}_q \Delta'_{e_{k-r-1}} e_{n-j} |_{t = 0} = \\
&\sum_{r = 0}^j q^{r \choose 2} {k \brack r}_q {k + j - r - 1 \brack j - r}_q q^{k - r \choose 2}
\sum_{b = k - r}^{n-j} (-1)^{n-j-b} {b - 1 \brack k-r-1}_q 
q^{{b + 1 \choose 2} - (n-j)b} 
D_{n-j,b}.
\end{split}
\end{equation}

We want to show that the RHS of Equation~\eqref{what-we-want} is equal to the RHS of
Equation~\eqref{also-what-we-know}.
To this end, let $p$ be an integer in the range $k-j \leq p \leq n-j$.  The coefficient of $D_{n-j,m}$ in 
Equation~\eqref{what-we-want} is
\begin{equation}
\label{d-expression-one}
q^{ {k \choose 2} + {j \choose 2}}  \sum_{r = p}^{p+j} (-1)^{n-r} {r-1 \brack k-1}_q
q^{{r + 1 \choose 2} - nr} {r \brack j}_q 
q^{(r-p)(n-j-p)} {j \brack r-p}_q
\end{equation}
whereas the coefficient of $D_{n-j,p}$ in Equation~\ref{also-what-we-know} is 
\begin{equation}
\label{d-expression-two}
\sum_{r = k-p}^j q^{r \choose 2} {k \brack r}_q {k + j - r - 1 \brack j - r}_q q^{k - r \choose 2}
(-1)^{n-j-p}
 {p - 1 \brack k-r-1}_q 
q^{{p + 1 \choose 2} - (n-j)p}.
\end{equation}
Theorem~\ref{delta-zero} will be proven if we can only establish the equality
of the expressions \eqref{d-expression-one}
and \eqref{d-expression-two}.
This is Lemma~\ref{q-binomial-lemma}.
\end{proof}

We use Theorem~\ref{delta-zero} to derive the more general
Theorem~\ref{delta-hl-expansion-theorem}.  In this proof we will
use the notation of plethysm; see \cite{Hagbook}.

\begin{proof}  (of Theorem~\ref{delta-hl-expansion-theorem})
Let $k \leq n$ be positive integers.
The polynomials $Q'_{\mu}$ and $\rev_q(Q'_{\mu})$ have Schur expansions
\begin{align}
Q'_{\mu} &= \sum_{\lambda \vdash n} K_{\lambda,\mu}(q) s_{\lambda}, \\
\rev_q(Q'_{\mu}) &= \sum_{\lambda \vdash n} q^{b(\mu)} K_{\lambda,\mu}(1/q) s_{\lambda}.
\end{align}
By Equation~\eqref{c-hl-expansion}, Equation~\eqref{delta-expansion-zero},
and the truth of the Delta Conjecture at $q = 0$ (i.e., Theorem~\ref{delta-zero}) we have the 
following identity.
\begin{align}
\label{DC0}
q^{k(k-1)}\sum_{\mu\vdash n} (-1)^{n-\ell(\mu)}q^{-n-b(\mu)+\sum_{i=1}^n \binom{m_i +1}{2}}\begin{bmatrix} \ell(\mu) -1\\k-1\end{bmatrix}_q
\left [ \begin{matrix} \ell(\mu) \\ m(\mu) \end{matrix} \right ]_q 
K_{\lambda,\mu}(1/q)\\
\notag
=\sum_{\mu\vdash n \atop \ell(\mu)=k}q^{b(\mu)}
\left [ \begin{matrix} \ell(\mu) \\ m(\mu) \end{matrix} \right ]_q 
K_{\lambda ',\mu} (q)
\end{align}
Equation~\eqref{DC0} is also recorded in \cite[Prop. 3.2]{GHRY}.

Using reasoning identical to that of our derivation of Equation~\eqref{delta-expansion-zero},
we see that $ \Delta'_{s_{\nu}} e_n|_{t = 0}$ has the following expansion
in the $q$-reversed $Q'$-basis.
\begin{equation}
\label{schur-delta-expansion-first}
\Delta'_{s_{\nu}} e_n|_{t = 0} = \sum_{\mu \vdash n}
(-1)^{n - \ell(\mu)} s_{\nu}(q, q^2, \dots, q^{\ell(\mu) - 1})
q^{-n - 2 b(\mu) + \sum_i {m_i(\mu) + 1 \choose 2}}
{\ell(\mu) \brack m(\mu)}_q \cdot \rev_q(Q'_{\mu}).
\end{equation}

Multiplying both sides of (\ref{DC0}) by $s_{\lambda}$, 
summing over $\lambda$ and applying $\omega$ we get the following equivalent form of
(\ref{DC0}):
\begin{align}
\label{DC1}
q^{k(k-1)}\sum_{\mu\vdash n} (-1)^{n-\ell(\mu)}q^{-n-b(\mu)+\sum_{i=1}^n \binom{m_i +1}{2}}\begin{bmatrix} \ell(\mu) -1\\k-1\end{bmatrix}_q
\left [ \begin{matrix} \ell(\mu) \\ m(\mu) \end{matrix} \right ]_q 
\sum_{\lambda}  K_{\lambda ^{\prime},\mu}(1/q) s_{\lambda} \\
\notag
=\sum_{\mu\vdash n \atop \ell(\mu)=k}q^{b(\mu)}
\left [ \begin{matrix} \ell(\mu) \\ m(\mu) \end{matrix} \right ]_q 
Q^{\prime}_{\mu}
\end{align}
for all $\lambda \vdash n$ and $1\le k\le n$.

Note that the sum on the RHS of (\ref{DC1}) also occurs on the RHS in 
Theorem~\ref{delta-hl-expansion-theorem}.  
By Equation~\eqref{schur-delta-expansion-first} the following 
equation is equivalent to Theorem~\ref{delta-hl-expansion-theorem}.
 \begin{align}
 \label{DC2}
 \sum_{\mu \vdash n} (-1)^{n - \ell (\mu)} q^{-b(\mu) -n + \sum_{i} {m_i(\mu)+1 \choose 2} }
 \left [ \begin{matrix} \ell(\mu) \\ m(\mu) \end{matrix} \right ]_q 
 \sum_{\lambda} s_{\lambda} K_{\lambda ^{\prime},\mu}(1/q) s_{\nu}(q,q^2,\ldots ,q^{\ell(\mu)-1})  \\
 \notag
 = \sum_{\mu \vdash n}  (-1)^{n-\ell(\mu)}                     
 q^{-{\ell(\mu) \choose 2}  -{ \ell(\mu) \choose 2} + \ell(\mu)(\ell(\mu)-1) -n -b(\mu) + 
 \sum_{i} {m_i(\mu)+1 \choose 2}}
  \left [ \begin{matrix} \ell(\mu) \\ m(\mu) \end{matrix} \right ]_q 
 \sum_{\lambda} s_{\lambda} K_{\lambda ^{\prime},\mu} (1/q) \\
 \notag
 \times
 \sum_{k,\rho \atop \ell(\rho)=k-1, \, |\rho|=|\nu|} q^{|\nu|+b(\rho)}
 \left [ \begin{matrix} \ell(\mu)-1 \\ k-1 \end{matrix} \right ]_q 
 \left [ \begin{matrix} k-1 \\ m(\rho) \end{matrix} \right ]_q K_{\nu, \rho}(q).
 \end{align}
If we can show the coefficients of $s_{\lambda} K_{\lambda ^{\prime},\mu}(1/q)$ in the inner sums on both sides of (\ref{DC2}) are equal for any $\mu \vdash n$ then (\ref{DC2}), and hence 
Theorem~\ref{delta-hl-expansion-theorem}, will follow.   Replacing $\ell(\mu)$ by $j+1$ this statement can be expressed as
 \begin{align}
 \label{simple2}
 s_{\nu}(1,q,q^2,\ldots ,q^{j-1})  =
 \sum_{k,\rho \atop \ell(\rho)=k-1, |\rho|=|\nu|} 
 q^{b(\rho)}
  \left [ \begin{matrix} j  \\ k-1 \end{matrix} \right ]_q 
 \left [ \begin{matrix} k-1 \\ m(\rho) \end{matrix} \right ]_q
 K_{\nu,\rho}(q),
 \end{align}
 for any nonnegative integer $j$.
 
 To prove (\ref{simple2}), multiply both sides of (\ref{simple2}) 
 by $s_{\nu} = s_{\nu}[\xx]$ 
 and sum over $\nu$.   Using the Cauchy identity, (\ref{simple2}) is thus
 equivalent to
 \begin{align}
 \label{simple3}
h_n \left[ (1-q^j)\frac {\xx}{1-q} \right] =
\sum_{k,\rho \atop \ell(\rho)=k-1, \, |\rho|=|\nu|} 
 q^{b(\rho)}
  \left [ \begin{matrix} j  \\ k-1 \end{matrix} \right ]_q 
 \left [ \begin{matrix} k-1 \\ m(\rho) \end{matrix} \right ]_q
 \sum_{\nu} s_{\nu}[\xx] K_{\nu,\rho}(q),
 \end{align}
 for any nonnegative integer $j$.  Using  
 \cite[Eqn. 14]{GHRY} this can be expressed as
 \begin{align}
 \label{simple4}
h_n\left[(1-q^j)\frac {\xx}{1-q} \right] =
\sum_{\rho , \, |\rho|=|\nu|} 
 q^{n(\rho)}
  \left [ \begin{matrix} j  \\ \ell(\rho) \end{matrix} \right ]_q 
  [\ell(\rho)]!(1-q)^{\ell(\rho)}
P_{\rho}\left[ \frac{\xx}{1-q};q \right],
 \end{align}
for any nonnegative integer $j$.   
Here $P_{\rho}$ is the Hall-Littlewood $P$-function.
Making the transformations $\xx \mapsto \xx/(1-q)$ and
$\yy \mapsto 1 - q^j$ in
\cite[Eqn. 17]{GHRY}, another expression for the LHS of (\ref{simple4})
is
\begin{align}
\label{Cauchy2}
\sum_{\rho , \, |\rho|=|\nu|} 
P_{\rho}\left[ \frac{\xx}{1-q};q \right] \, Q^{\prime}_{\rho}[1-q^j;q].
\end{align}
By \cite[Lem. 3.3]{GHRY} we have
\begin{align}
\label{Lemma33}
Q^{\prime}_{\rho}[1-q^j;q] = q^{b(\rho)}(1-q^j)(1-q^{j-1})\cdots (1-q^{j-\ell(\rho)+1}).
\end{align}
Using (\ref{Lemma33}) in (\ref{Cauchy2}) and simplifying we see the RHS of (\ref{simple4}) is the same as (\ref{Cauchy2}), which completes the proof.
\end{proof}

We want to prove the algebraic interpretation of 
$\Delta'_{s_{\nu}} e_n$ at $t = 0$ given in
Theorem~\ref{algebraic-interpretation}.
This interpretation is based on the $q$-reversal of the following symmetric function identity.
We consider symmetric functions in two infinite variable sets: $\xx$ and $\yy$.
We let $\omega_{\xx}$ and $\omega_{\yy}$ be the $\omega$ involution acting
on the $\xx$ and $\yy$ variables (respectively).

\begin{proposition}
\label{omega-to-c}
Let $n, m \geq 0$.  We have
\begin{equation}
\label{omega-to-c-equation}
\sum_{\nu \vdash m} s_{\nu}(\yy) \cdot \omega_{\xx} \Delta'_{s_{\nu}} e_n(\xx) |_{t = 0} = 
 \sum_{k \geq 0} q^{m-k+1} \omega_{\yy} C_{m, k-1}(\yy; q) \cdot \omega_{\xx} C_{n, k}(\xx; q).
\end{equation}
\end{proposition}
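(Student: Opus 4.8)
The plan is to obtain Proposition~\ref{omega-to-c} as an essentially formal consequence of Theorem~\ref{delta-hl-expansion-theorem}, Equation~\eqref{c-hl-expansion}, and the Schur expansion $Q'_\mu = \sum_\lambda K_{\lambda,\mu}(q)\, s_\lambda$; the only genuine computation is a short reconciliation of $q$-exponents. The first step is to recognize that the inner sum $\sum_{\mu \vdash n,\, \ell(\mu) = k} q^{\overline{b}(\mu)}\, {k \brack m(\mu)}_q\, Q'_\mu$ occurring in Theorem~\ref{delta-hl-expansion-theorem} is, by Equation~\eqref{c-hl-expansion}, precisely $\omega C_{n,k}$. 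Applying $\omega$ in the $\xx$ variables, Theorem~\ref{delta-hl-expansion-theorem} can therefore be rewritten as
\[
\omega_\xx \Delta'_{s_\nu} e_n(\xx)|_{t=0} \;=\; \sum_{k \geq 0} P_{\nu,k-1}(q)\; \omega_\xx C_{n,k}(\xx;q),
\]
where the sum over all $k \geq 0$ is legitimate because $P_{\nu,k-1}(q) = 0$ outside the range $\ell(\nu)+1 \leq k \leq |\nu|+1$ (the Kostka--Foulkes polynomial $K_{\nu,\rho}(q)$ vanishes unless $\nu \trianglerighteq \rho$, which forces $\ell(\nu) \leq \ell(\rho) = k-1 \leq |\rho| = |\nu|$).

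Next I multiply this identity by $s_\nu(\yy)$, sum over $\nu \vdash m$, and interchange the two sums to obtain
\[
\sum_{\nu \vdash m} s_\nu(\yy)\; \omega_\xx \Delta'_{s_\nu} e_n(\xx)|_{t=0} \;=\; \sum_{k \geq 0} \Bigl( \sum_{\nu \vdash m} s_\nu(\yy)\, P_{\nu,k-1}(q) \Bigr)\, \omega_\xx C_{n,k}(\xx;q).
\]
Comparing with the right-hand side of Proposition~\ref{omega-to-c}, it suffices to prove, for each $k$, the identity in the single variable set $\yy$ given by
\[
\sum_{\nu \vdash m} s_\nu(\yy)\, P_{\nu,k-1}(q) \;=\; q^{\,m-k+1}\; \omega_\yy C_{m,k-1}(\yy;q).
\]

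To establish this, I expand the right-hand side using Equation~\eqref{c-hl-expansion}, namely $\omega_\yy C_{m,k-1}(\yy;q) = \sum_{\mu \vdash m,\, \ell(\mu)=k-1} q^{\overline{b}(\mu)}\, {k-1 \brack m(\mu)}_q\, Q'_\mu(\yy;q)$, substitute $Q'_\mu = \sum_\nu K_{\nu,\mu}(q)\, s_\nu$, and collect the coefficient of $s_\nu(\yy)$. This reduces the claim to
\[
P_{\nu,k-1}(q) \;=\; q^{\,m-k+1} \sum_{\substack{\mu \vdash m \\ \ell(\mu)=k-1}} q^{\overline{b}(\mu)}\, {k-1 \brack m(\mu)}_q\, K_{\nu,\mu}(q).
\]
Since $\overline{b}(\mu) = b(\mu) - \binom{k-1}{2}$ and Pascal's identity gives $\binom{k}{2} = \binom{k-1}{2} + (k-1)$, the right-hand side equals $q^{\,m - \binom{k}{2}} \sum_{\mu} q^{b(\mu)}\, {k-1 \brack m(\mu)}_q\, K_{\nu,\mu}(q)$, which is exactly the formula defining $P_{\nu,k-1}(q)$ in Theorem~\ref{delta-hl-expansion-theorem}, and the proof is complete.

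The argument is formal once Theorem~\ref{delta-hl-expansion-theorem} is available, so I do not anticipate a real obstacle; the only points needing care are justifying the extension of the $k$-sum to all $k \geq 0$ via the stated vanishing of $P_{\nu,k-1}(q)$ (and of $C_{m,k-1}$ when $k-1 \notin \{1,\dots,m\}$, where $\OP_{m,k-1}$ is empty), and tracking the $q$-exponents carefully so that the single Pascal identity $\binom{k}{2} = \binom{k-1}{2}+(k-1)$ delivers the factor $q^{m-k+1}$ relating $P_{\nu,k-1}$ to the coefficients of $\omega_\yy C_{m,k-1}$.
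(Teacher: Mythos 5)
Your proof is correct and follows essentially the same route as the paper: both recognize the inner $\mu$-sum in Theorem~\ref{delta-hl-expansion-theorem} as $\omega C_{n,k}$ via Equation~\eqref{c-hl-expansion}, interchange the sums over $\nu$ and $k$, expand $P_{\nu,k-1}(q)$, reassemble $\sum_\nu K_{\nu,\rho}(q)\, s_\nu(\yy)$ into $Q'_\rho(\yy;q)$, and reconcile the $q$-exponents using $\overline{b}(\rho) = b(\rho) - \binom{k-1}{2}$ together with $\binom{k}{2} = \binom{k-1}{2} + (k-1)$. The only cosmetic difference is that you isolate the per-$k$ coefficient identity $\sum_{\nu \vdash m} s_\nu(\yy)\, P_{\nu,k-1}(q) = q^{m-k+1}\, \omega_\yy C_{m,k-1}(\yy;q)$ as a standalone claim, whereas the paper carries out the same manipulations in one chain of equalities.
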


\begin{proof}
By Theorem~\ref{delta-hl-expansion-theorem},
%\begin{small}
\begin{align}
\sum_{\nu \vdash m} &s_{\nu}(\yy) \cdot \omega_{\xx} \Delta'_{s_{\nu}} e_n(\xx) |_{t = 0} \\ &= 
\sum_{\nu \vdash m} s_{\nu}(\yy) \cdot \omega_{\xx} \sum_{k = \ell(\nu) + 1}^{|\nu| + 1} P_{\nu, k-1}(q) 
\sum_{\substack{\mu \vdash n \\ \ell(\mu) = k}} q^{\overline{b}(\mu)} {k \brack m(\mu)}_q Q'_{\mu}(\xx; q) \\
&= 
\sum_{\nu \vdash m} s_{\nu}(\yy) \cdot \omega_{\xx} \sum_{k = \ell(\nu) + 1}^{|\nu| + 1} P_{\nu, k-1}(q) 
\cdot \omega_{\xx} C_{n,k}(\xx; q) \\
&=
\sum_{\nu \vdash m} \sum_{k = \ell(\nu) + 1}^{|\nu| + 1} 
q^{|\nu| - {k \choose 2}} \sum_{\substack{\rho \vdash m \\ \ell(\rho) = k-1}}
q^{b(\rho)} {k -1 \brack m(\rho)}_q K_{\nu, \rho}(q) s_{\nu}(\yy) 
\cdot \omega_{\xx} C_{n,k}(\xx; q) \\
&=
\sum_{\nu \vdash m} \sum_{k = \ell(\nu) + 1}^{|\nu| + 1} 
q^{m - k + 1} \sum_{\substack{\rho \vdash m \\ \ell(\rho) = k-1}}
q^{\overline{b}(\rho)} {k -1 \brack m(\rho)}_q K_{\nu, \rho}(q) s_{\nu}(\yy) 
\cdot \omega_{\xx} C_{n,k}(\xx; q) \\
&=
\sum_{\substack{\nu \vdash m \\ k \geq 1}}
q^{m - k + 1} \sum_{\substack{\rho \vdash m \\ \ell(\rho) = k-1}}
q^{\overline{b}(\rho)} {k -1 \brack m(\rho)}_q K_{\nu, \rho}(q) s_{\nu}(\yy) 
\cdot \omega_{\xx} C_{n,k}(\xx; q) \\
&= \sum_{k \geq 1} q^{m-k+1} \sum_{\substack{\rho \vdash m \\ \ell(\rho) = k-1}}
q^{\overline{b}(\rho)} {k -1 \brack m(\rho)}_q 
\left[ \sum_{\nu \vdash m} K_{\nu, \rho}(q) s_{\nu}(\yy)  \right] 
\cdot \omega_{\xx} C_{n,k}(\xx; q) \\
&= \sum_{k \geq 1} q^{m-k+1} \sum_{\substack{\rho \vdash m \\ \ell(\rho) = k-1}}
q^{\overline{b}(\rho)} {k -1 \brack m(\rho)}_q 
Q'_{\rho}(\yy; q)
\cdot \omega_{\xx} C_{n,k}(\xx; q) \\
&= \sum_{k \geq 1} q^{m-k+1} \omega_{\yy} C_{m, k-1}(\yy; q)
\cdot \omega_{\xx} C_{n,k}(\xx; q),
\end{align}
%\end{small}
which is what we wanted to prove.
\end{proof}

We want to $q$-reverse the identity of Proposition~\ref{omega-to-c}.

\begin{proposition}
\label{omega-to-d}
Let $n, m \geq 0$.  We have
\begin{equation}
\label{omega-to-d-equation}
\sum_{\nu \vdash m} 
q^{b(\nu)} s_{\nu}(\yy) \cdot (\rev_q \circ \omega_{\xx}) \Delta'_{s_{\nu}} e_n(\xx) |_{t = 0} = 
 \sum_{k \geq 0} q^{mn - km - kn + n + k(k-1)}
 D_{m,k-1}(\yy;q) \cdot D_{n,k}(\xx;q).
\end{equation}
\end{proposition}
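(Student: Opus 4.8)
The plan is to obtain~\eqref{omega-to-d-equation} by applying the coefficient‑reversal operator $\rev_q$ to both sides of~\eqref{omega-to-c-equation}; the whole argument is a bookkeeping of $q$‑degrees. Recall that if $F$ is a polynomial in $q$ (with coefficients in $\Lambda_{\yy}\otimes\Lambda_{\xx}$) of $q$‑degree $D$, then $\rev_q(F)=q^{D}F|_{q\mapsto 1/q}$, so that for $F=\sum_i T_i$ with $T_i$ of $q$‑degree $d_i$ one has $\rev_q(F)=\sum_i q^{\,D-d_i}\rev_q^{[d_i]}(T_i)$, where $\rev_q^{[d]}$ denotes reversal relative to level $d$. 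I need two degree facts. First, by \cite{HRS} the graded ring $R_{n,k}$ has its socle in degree $d_{n,k}:=\binom{k}{2}+(n-k)(k-1)$, so $\deg_q(\omega C_{n,k})=d_{n,k}$ and hence $D_{n,k}(\xx;q)=q^{d_{n,k}}\,\omega C_{n,k}(\xx;1/q)$. Second, for $\nu\vdash m$ with $\ell(\nu)\le n-1$ I claim that $\deg_q\bigl(\omega_{\xx}\Delta'_{s_\nu}e_n|_{t=0}\bigr)=\deg_q\bigl(\Delta'_{s_\nu}e_n|_{t=0}\bigr)=m(n-1)-b(\nu)$, while if $\ell(\nu)\ge n$ then $\Delta'_{s_\nu}e_n|_{t=0}=0$ (every summand of~\eqref{schur-delta-expansion-first} has a Schur specialization in too few variables), so such $\nu$ play no role in~\eqref{omega-to-d-equation}.

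Granting these, the computation is short. On the right of~\eqref{omega-to-c-equation} the $k$‑th summand has $q$‑degree $(m-k+1)+d_{m,k-1}+d_{n,k}=(k-1)(m+n-k)$, and over the range of $k$ for which it is nonzero (for $m\ge 1$ this is $2\le k\le\min(m+1,n)$, using $C_{m,0}=0$; the case $m=0$ is immediate) the maximum of $(k-1)(m+n-k)$ equals $mn-m$, attained at $k=\min(m+1,n)$. Hence both sides of~\eqref{omega-to-c-equation} have $q$‑degree $D=mn-m$. Reversing the right side term by term and substituting $D_{m,k-1}=q^{d_{m,k-1}}\omega C_{m,k-1}(1/q)$ and $D_{n,k}=q^{d_{n,k}}\omega C_{n,k}(1/q)$ yields $\sum_k q^{\,(mn-m)-(k-1)(m+n-k)}D_{m,k-1}(\yy;q)D_{n,k}(\xx;q)$, and the identity $(mn-m)-(k-1)(m+n-k)=mn-km-kn+n+k(k-1)$ turns this into the right side of~\eqref{omega-to-d-equation}. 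Reversing the left side term by term and using the claim, the $\nu$‑th summand reverses to $q^{\,(mn-m)-(m(n-1)-b(\nu))}\,s_\nu(\yy)\,(\rev_q\circ\omega_\xx)\Delta'_{s_\nu}e_n|_{t=0}=q^{b(\nu)}\,s_\nu(\yy)\,(\rev_q\circ\omega_\xx)\Delta'_{s_\nu}e_n|_{t=0}$, which is the $\nu$‑th summand of the left side of~\eqref{omega-to-d-equation}. Since $\rev_q$ carries the identity of Proposition~\ref{omega-to-c} to an identity of polynomials, this completes the proof modulo the claim.

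The claim is the crux, and I would extract it from the $\rev_q Q'$‑expansion~\eqref{schur-delta-expansion-first}. Writing $j=\ell(\mu)$, the three relevant contributions are a principal specialization $\deg_q s_\nu(q,q^2,\dots,q^{j-1})=(j-1)m-b(\nu)$ when $j-1\ge\ell(\nu)$ (top term from the column‑strict row‑weak filling of $\nu$ whose $c$‑th column is $j-\nu'_c,\dots,j-1$; the specialization vanishes otherwise), the Kostka--Foulkes degree $\deg_q(\rev_q Q'_\mu)=b(\mu)$, and $\deg_q{j \brack m(\mu)}_q=\sum_{a<b}m_a(\mu)m_b(\mu)$; these are standard (\cite{Macdonald}). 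Feeding them into~\eqref{schur-delta-expansion-first} and using $\sum_a\binom{m_a(\mu)+1}{2}+\sum_{a<b}m_a(\mu)m_b(\mu)=\binom{j+1}{2}$, exactly as in the proof of Lemma~\ref{new-delta-expansion}, the $\mu$‑term has $q$‑degree $(j-1)m-b(\nu)-n-b(\mu)+\binom{j+1}{2}$. Since $b(\mu)\ge\binom{j}{2}$ whenever $\ell(\mu)=j$, with equality only for $\mu=(n-j+1,1^{j-1})$, this is at most $j(m+1)-m-b(\nu)-n$, which is strictly increasing in $j$ and hence at most $m(n-1)-b(\nu)$, with equality forcing $j=n$ and therefore $\mu=(1^n)$. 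Thus exactly one partition $\mu=(1^n)$ contributes to the extremal $q$‑degree, so no cancellation occurs, and its contribution is manifestly nonzero; this gives the claim. The place I expect to need the most care is precisely this no‑cancellation step — confirming that the top $q$‑degree is realized by a unique $\mu$ — since a priori several partitions could conspire there; everything else is routine $q$‑binomial manipulation.
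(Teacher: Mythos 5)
Your proof is correct and follows essentially the same route as the paper's: apply $\rev_q$ to both sides of Proposition~\ref{omega-to-c}, track $q$-degrees of each summand, and reduce the left-hand degree claim to the $\rev_q Q'$-expansion~\eqref{schur-delta-expansion-first}. Where the paper asserts that the $\mu$-summand degree is ``maximized uniquely when $\mu=(1^n)$'' without detail, you supply the argument ($b(\mu)\ge\binom{j}{2}$ with equality characterization, and monotonicity in $j=\ell(\mu)$), and you also explicitly handle the vanishing of $\Delta'_{s_\nu}e_n|_{t=0}$ when $\ell(\nu)\ge n$, a degenerate case the paper passes over silently; both are worthwhile additions but do not change the method.
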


\begin{proof}
We want to $q$-reverse both sides of Equation~\eqref{omega-to-c-equation}.  We begin with the
LHS.

{\bf Claim:}  {\em For any partition $\nu \vdash m$, the $q$-degree of 
$\Delta'_{s_{\nu}} e_n|_{t = 0}$ is $(n-1)m - b(\nu)$. }

To see why the Claim is true, let $\nu \vdash m$ and consider
Equation~\eqref{schur-delta-expansion-first}, recapitulated here:
\begin{equation*}
\Delta'_{s_{\nu}} e_n|_{t = 0} = \sum_{\mu \vdash n}
(-1)^{n - \ell(\mu)} s_{\nu}(q, q^2, \dots, q^{\ell(\mu) - 1})
q^{-n - 2 b(\mu) + \sum_i {m_i(\mu) + 1 \choose 2}}
{\ell(\mu) \brack m(\mu)}_q \cdot \rev_q(Q'_{\mu}).
\end{equation*}
We know that $Q'_{\mu}$ (and also $\rev_q(Q'_{\mu})$) has $q$-degree
$b(\mu)$.  If $\mu \vdash n$ is such that $\ell(\mu) > \ell(\nu)$ so that 
the $\mu$-summand on the RHS of
Equation~\eqref{schur-delta-expansion-first} does not vanish,
the $q$-degree of this $\mu$-summand is therefore
\begin{equation}
\label{q-degree-equation}
\sum_{i = j}^{\ell(\nu)} \nu_j(\ell(\mu) - j) - n - 2 b(\mu) + 
\sum_i {m_i(\mu) + 1 \choose 2} + \sum_{i < j} m_i(\mu) m_j(\mu) + b(\mu),
\end{equation}
or equivalently
\begin{equation}
\label{q-degree-equation-two}
m(\ell(\mu) - 1) - b(\nu) - n -b(\mu) +
\sum_i {m_i(\mu) + 1 \choose 2} + \sum_{i < j} m_i(\mu) m_j(\mu).
\end{equation}
It is not hard to see that Expression~\eqref{q-degree-equation-two} is maximized
uniquely when $\mu = (1^n)$, in which case it equals
\begin{equation}
\label{q-degree-equation-three}
m(n - 1) - b(\nu) - n - {n \choose 2} + {n+1 \choose 2} = 
m(n-1) - b(\nu),
\end{equation}
which completes the proof of the Claim.

Our Claim implies that the overall $q$-degree of 
the LHS of Equation~\ref{omega-to-c-equation} (and hence also the RHS)
is $m(n-1)$; this corresponds to the summand $\nu = (m)$ so that $b(\nu) = 0$.  
The $q$-reversal of the LHS of Equation~\eqref{omega-to-c-equation} 
is therefore
\begin{equation}
q^{m(n-1)} \sum_{\nu \vdash m} q^{b(\nu) - (n-1)m} s_{\nu}(\yy) \cdot
(\rev_q \circ \omega_{\xx}) \Delta'_{s_{\nu}} e_n(\xx) |_{t = 0},
\end{equation}
which coincides with the LHS of Equation~\eqref{omega-to-d-equation}.

Now we $q$-reverse the RHS of Equation~\eqref{omega-to-c-equation}.
Since the $q$-degree of $C_{m,k-1}(\yy;q)$ is $(k-2)m - {k-1 \choose 2}$ and the 
$q$-degree of $C_{n,k}(\xx;q)$ is $(k-1)n - {k \choose 2}$, the $q$-reversal of the 
RHS of Equation~\eqref{omega-to-c-equation} is 
\begin{equation}
q^{m(n-1)} \sum_{k \geq 0} q^{-m+k-1} \cdot 
\left[ q^{-(k-2)m + {k-1 \choose 2}} D_{m,k-1}(\yy;q) \right] \cdot
\left[ q^{-(k-1)n + {k \choose 2}} D_{n,k}(\xx;q) \right],
\end{equation}
which is equivalent to the RHS of 
Equation~\eqref{omega-to-d-equation}.
\end{proof}

We are in a position to prove Theorem~\ref{algebraic-interpretation}.

\begin{proof}
(of Theorem~\ref{algebraic-interpretation})
If $\lambda \vdash m$ and $\mu \vdash n$ are any partitions, the
Frobenius image of the irreducible $\symm_m \times \symm_n$-module
$S^{\lambda} \otimes S^{\mu}$ is
$\Frob(S^{\lambda} \otimes S^{\mu}) = s_{\lambda}(\yy) \cdot s_{\mu}(\xx)$,
regarded as an element of the ring $\Lambda(\xx) \otimes \Lambda(\yy)$ 
of formal power series which are separately symmetric in the $\xx$ and $\yy$ 
variables.  

More generally, if $V$ is any finite-dimensional
 $\symm_m \times \symm_n$-module,
there exist unique integers $c_{\lambda, \mu} \geq 0$ such that
\begin{equation}
V \cong_{\symm_m \times \symm_n} 
\bigoplus_{\lambda, \mu} c_{\lambda,\mu} S^{\lambda} \otimes S^{\mu}.
\end{equation}
We then set
\begin{equation}
\Frob(V) := \sum_{\lambda, \mu} c_{\lambda,\mu} s_{\lambda}(\yy) s_{\mu}(\xx).
\end{equation}
Finally, if $V = \oplus_{d \geq 0} V_d$ is a graded $\symm_m \times \symm_n$-module
with each graded piece $V_d$ finite-dimensional, we set
\begin{equation}
\grFrob(V;q) := \sum_{d \geq 0} \Frob(V) \cdot q^d.
\end{equation}

If $U$ is a graded $\symm_m$-module and $W$ is a graded $\symm_n$-module,
we have
\begin{equation}
\grFrob(U \otimes W; q) = \grFrob(U;q) \cdot \grFrob(W; q).
\end{equation}
Recall that the $\symm_m \times \symm_n$-module $V_{n,m}$ is defined by
\begin{equation}
V_{n,m} = \bigoplus_{k \geq 0} (R_{m,k-1} \otimes R_{n,k}) \{-mn + km + kn - n - k(k-1) \}.
\end{equation}
Applying Equation~\eqref{r-frobenius}, we see that the RHS of 
Equation~\eqref{omega-to-d-equation} may be expressed as
\begin{equation}
\sum_{k \geq 0} q^{mn - km - kn + n + k(k-1)}
D_{m,k-1}(\yy;q) \cdot D_{n,k}(\xx;q) =
\grFrob(V_{n,m};q).
\end{equation}

On the other hand, for any graded $\symm_m \times \symm_n$-module $V$ and any
partition $\nu \vdash m$, we have
\begin{equation}
\text{coefficient of $s_{\nu}(\yy)$ in $\grFrob(V; q)$} =
\grFrob( \Hom_{\symm_m}(S^{\nu},V); q).
\end{equation}
Therefore, we have 
\begin{equation}
(\rev_q \circ \omega_{\xx}) \Delta'_{s_{\nu}} e_n(\xx)|_{t = 0} =
q^{-b(\nu)} \cdot \grFrob( \Hom_{\symm_m}(S^{\nu}, V_{n,m});q),
\end{equation}
which is what we wanted to prove.
\end{proof}

\section{Closing remarks}
\label{Closing}

In this paper we found an expansion of 
$\omega \Delta'_{s_{\nu}} e_n|_{t = 0}$ in the dual Hall-Littlewood basis
for any partition $\nu \vdash m$.  This led to the algebraic interpretation
of $(\rev_q \circ \omega) \Delta'_{s_{\nu}} e_n|_{t = 0}$  presented in
Theorem~\ref{algebraic-interpretation}
 involving  tensor products of $R_{n,k}$ modules.  
 It may be interesting to find a simpler module whose graded Frobenius image
 is $(\rev_q \circ \omega) \Delta'_{s_{\nu}} e_n|_{t = 0}$.

 Let $k \leq n$ be positive integers.  The ring $R_{n,k}$ has the following
 geometric interpretation.
 Denote by $\PP^{k-1}$  the $(k-1)$-dimensional complex projective space of 
 lines through the origin in $\CC^k$ and let 
 $(\PP^{k-1})^n$ denote the $n$-fold Cartesian product of $\PP^{k-1}$ with itself.
In joint work with Pawlowski \cite{PR}, the second author defined the open subvariety
$X_{n,k} \subseteq (\PP^{k-1})^n$ given by
\begin{equation}
X_{n,k} := \{ (\ell_1, \dots, \ell_n) \in (\PP^{k-1})^n \,:\, \ell_1 + \cdots + \ell_n = \CC^k \}.
\end{equation}
A typical point in $X_{n,k}$ is an $n$-tuple $(\ell_1, \dots, \ell_n)$ of one-dimensional
subspaces of $\CC^k$ which together span $\CC^k$.

The symmetric group $\symm_n$ acts on $X_{n,k}$ by the rule 
$\pi.(\ell_1, \dots, \ell_n) := (\ell_{\pi_1}, \dots, \ell_{\pi_n})$ for any permutation
$\pi = \pi_1 \dots \pi_n \in \symm_n$.
If $H^{\bullet}(X_{n,k})$ denotes the singular cohomology of $X_{n,k}$ with integer coefficients,
this gives rise to an action of $\symm_n$ on $H^{\bullet}(X_{n,k})$.  
Pawlowski and the second author prove \cite{PR} that
\begin{equation}
\label{cohomology-identification}
H^{\bullet}(X_{n,k}) = 
\ZZ[x_1, \dots, x_n]/
\langle e_n, e_{n-1}, \dots, e_{n-k+1}, x_1^k, x_2^k, \dots, x_n^k \rangle.
\end{equation}
The identification~\eqref{cohomology-identification} may be regarded as both 
an isomorphism of graded rings and an isomorphism of graded $\ZZ[\symm_n]$-modules.
The variable $x_i$ represents the Chern class $c_1(\ell_i^*)$ of the dual to the 
$i^{th}$ tautological line bundle  $\ell_i \twoheadrightarrow X_{n,k}$.
In particular, we have $R_{n,k} = \QQ \otimes_{\ZZ} H^{\bullet}(X_{n,k})$.

The isomorphism~\eqref{cohomology-identification}, together with the fact that
$X_{n,n}$ is homotopy equivalent to the manifold 
$\mathcal{F\ell}(n)$ of complete flags in $\CC^n$,
justify the statement that $X_{n,k}$ is the flag variety attached to the 
Delta Conjecture (i.e. the Macdonald eigenoperator $\Delta'_{e_{k-1}}$).
Given an arbitrary partition $\nu$, it would be interesting to find an analogous
variety $X_{n,\nu}$ with 
an action of $\symm_n$ which would play the corresponding role
for the operator $\Delta'_{s_{\nu}}$.  That is, the cohomology ring 
$H^{\bullet}(X_{n,\nu})$ should carry an action of $\symm_n$ such that
(upon tensoring with $\QQ$), the graded Frobenius image of this action
is $(\rev_q \circ \omega) \Delta'_{s_{\nu}} e_n|_{t = 0}$.
The space $X_{n,k}$ solves this problem when $\nu = (1^{k-1})$.
Theorem~\ref{algebraic-interpretation} might be helpful in constructing 
such a space $X_{n,\nu}$ in general.

We close by giving a geometric interpretation of Equation~\eqref{omega-to-c-equation},
recapitulated here:
\begin{equation*}
\sum_{\nu \vdash m} s_{\nu}(\yy) \cdot \omega_{\xx} \Delta'_{s_{\nu}} e_n(\xx) |_{t = 0} = 
 \sum_{k \geq 0} q^{m-k+1} \omega_{\yy} C_{m, k-1}(\yy; q) \cdot \omega_{\xx} C_{n, k}(\xx; q).
\end{equation*}
If $M = M_0 \oplus M_1 \oplus \cdots \oplus M_d$ is any graded vector space with 
$M_d \neq 0$, let 
$\widetilde{M} = \widetilde{M}_0 \oplus \widetilde{M}_1 \oplus \cdots \oplus \widetilde{M}_d$
 be the reversed graded vector space with components
\begin{equation}
\widetilde{M}_i := M_{d-i}, \quad 0 \leq i \leq d.
\end{equation}
In terms of reversals of $R$-modules, 
Equation~\eqref{omega-to-c-equation} reads
\begin{equation}
\omega \Delta'_{s_{\nu}} e_n |_{t = 0} =
\grFrob( \Hom_{\symm_n}(S^{\nu}, W_{n,m}) ),
\end{equation}
where 
\begin{equation}
W_{n,m} := \bigoplus_{k \geq 0} (\widetilde{R}_{m,k-1} \otimes \widetilde{R}_{n,k})
\{-m+k-1\}.
\end{equation}
Taking the reversal $\widetilde{R}_{n,k}$ of the quotient 
$R_{n,k} := \QQ[x_1, \dots, x_n]/I_{n,k}$ is not a natural ring-theoretic operation,
but it has a geometric interpretation in terms of the variety $X_{n,k}$.

Let $X_{n,k}^+ = X_{n,k} \cup \{\infty\}$ denote the one-point compactification of $X_{n,k}$,
where $\infty$ is the adjoined point.  The {\em Borel-Moore homology} 
$\bar{H}_{\bullet}(X_{n,k})$ of 
$X_{n,k}$ is the (singular) homology of the pair $(X_{n,k}^+, \{\infty\})$:
\begin{equation}
\bar{H}_{\bullet}(X_{n,k}) := H_{\bullet}(X_{n,k}^+, \{ \infty \}).
\end{equation}

The action of $\symm_n$ on $R_{n,k}$ is both continuous and proper, and so induces
a (graded) action of $\symm_n$ on the Borel-Moore homology
 $\bar{H}_{\bullet}(X_{n,k})$.  By Poincar\'e duality and 
 Equation~\eqref{cohomology-identification}, we have
 the isomorphism of $\symm_n$-modules
 \begin{equation}
 \widetilde{R}_{n,k} \cong_{\symm_n} \QQ \otimes_{\ZZ} \bar{H}_{\bullet}(X_{n,k}).
 \end{equation}
 This yields a geometric interpretation of 
 Equation~\eqref{omega-to-c-equation} in terms of Borel-Moore homology.

\section{Acknowledgements}
\label{Acknowledgements}

J. Haglund was partially supported by NSF Grant DMS-1600670.
B. Rhoades was partially supported by NSF Grant DMS-1500838.
M. Shimozono was partially supported by NSF Grant DMS-1600653.


\begin{thebibliography}{99}
 
 \bibitem{AndrewsAskeyRoy}
 G. E. Andrews, R. Askey, and R. Roy.
 {\it Special Functions}.
 Encyclopedia of Mathematics and its Applications,
 Vol. 71.  Cambridge University Press: Cambridge (1999).


\bibitem{CM}  E. Carlsson and A. Mellit.
A proof of the shuffle conjecture.
Preprint, 2015.  {\tt arXiv:1508.06239}.

\bibitem{GHRY}  A. Garsia, J. Haglund, J. Remmel, and M. Yoo.
A proof of the Delta Conjecture when $q = 0$.
Preprint, 2017.
{\tt arXiv:1710.07078}.

\bibitem{GasperRahman} G. Gasper and M. Rahman.
{\it Basic Hypergeometric Series}, second ed.
Encyclopedia of Mathematics and its Applications,
Vol. 96.  Cambridge University Press: Cambridge (2004).
 
  \bibitem{Hagbook} J. Haglund.  {\it The $q$,$t$-Catalan numbers and the space of diagonal harmonics}.
    University Lecture Series, Vol. 41, American Mathematical Society, Providence, RI, (2008).
      With an appendix on the combinatorics of Macdonald polynomials.
 


\bibitem{HRW}  J. Haglund, J. Remmel, and A. T. Wilson.  The Delta Conjecture.  
Accepted, {\it Trans. Amer. Math. Soc.}, 2016.  {\tt arXiv:1509.07058}.

\bibitem{HRS}  J. Haglund, B. Rhoades, and M. Shimozono.  Ordered
set partitions, generalized coinvariant algebras, and the Delta Conjecture.
Accepted, {\it Adv. Math.}, 2018.  {\tt arXiv:1609.07575}.

\bibitem{Haiman}  M. Haiman.  Vanishing theorems and character formulas
for the Hilbert scheme of points in the plane.  {\it Invent. Math.},
{\bf 149 (2)} (2002), 371--407.

\bibitem{Macdonald}  I. G. Macdonald.  
{\it Symmetric Functions and Hall Polynomials,} Second edition.
Oxford Mathematican Monographs.
New York: The Clarendon Press Oxford University Press, 1995.
With contributions by A. Zelevinsky, Oxford Science Publications.

\bibitem{PR}  B. Pawlowski and B. Rhoades.
A flag variety for the Delta Conjecture.
Submitted, 2018. {\tt arXiv:1711.08301}.


\bibitem{Rhoades}  B. Rhoades.  Ordered set partition statistics and the Delta Conjecture.
{\it J. Combin. Theory Ser. A}, {\bf 154} (2018), 172--217.

\bibitem{RW}  B. Rhoades and A. T. Wilson.
Tail positive words and generalized coinvariant algebras.
{\it Electron. J. Combin.}, {\bf 24 (3)}, (2017), P3.21. (29 pages)

\bibitem{Romero}  M. Romero.  The Delta Conjecture at $q = 1$. 
{\it Trans. Amer. Math. Soc.} {\bf 369} (2017), 7509--7530.


\bibitem{WMultiset}  A. T. Wilson.  An extension of MacMahon's Equidistribution Theorem
to ordered multiset partitions. 
{\it Electron. J. Combin.}, {\bf 23 (1)} (2016), P1.5.

\bibitem{Zabrocki}  M. Zabrocki.  A proof of the 4-variable Catalan polynomial
of the Delta Conjecture.
Preprint, 2016.  {\tt arXiv:1609.03497}.


  
\end{thebibliography}
\end{document}